\documentclass[a4paper,12pt]{amsart}
\input epsf
\usepackage{epsfig}
\usepackage{color}
\DeclareMathAlphabet\mathbold{OML}{cmm}{b}{it}
\textheight=9.0in
\textwidth=6.4in
\oddsidemargin=-0.1in
\evensidemargin=-0.1in
\vsize 8.2in
\numberwithin{equation}{section}
 
\newtheorem{theorem}{Theorem}[section]
\newtheorem{lemma}{Lemma}[section]



\def\b1{{\mathbf 1}}
\def\bv{{\mathbf v}}
\def\bu{{\mathbf u}}
\def\bw{{\mathbf w}}
\def\bb{{\mathbf b}}
\def\bg{{\mathbf g}}
\def\bh{{\mathbf h}}
\def\br{{\mathbf r}}

\def\bd{{\mathbf d}}

\def\bp{{\mathbf p}}

\def\bx{{\mathbf x}}

\def\bbf{{\mathbf f}}

\def\balpha{{\boldsymbol \alpha}}
 
 \newcommand{\avg}[1]{\left\{\hspace{-0.045in}\left\{#1\right\}
\hspace{-0.045in}\right\}}
\newcommand{\jump}[1]{\left[\hspace{-0.025in}\left[#1\right] 
\hspace{-0.025in}\right]}


\newcommand{\B}{{\mathcal B}}

\newcommand{\J}{{\mathcal J}}

\renewcommand{\O}{{\mathcal O}}


%
%




\title[A combined preconditioner for nonsymmetric systems]
{A combined preconditioning strategy for nonsymmetric systems}
\author{Blanca Ayuso de Dios} 
\address{Computer,~Electrical and Mathematical Sciences \& Engineering Division (CEMSE) 
King Abdullah University of Science and Technology
Thuwal 23955-6900, Kingdom of Saudi Arabia} 
 \email{blanca.ayusodios@kaust.edu.sa}
\author{Andrew T. Barker} 
\address{Center for Applied Scientific Computing,
             Lawrence Livermore National Laboratory,
             7000 East Avenue, Mail Stop L-560,
             Livermore, CA 94550, U.S.A.}
 \email{barker29@llnl.gov}

\author{Panayot S. Vassilevski}

\address{Center for Applied Scientific Computing,
             Lawrence Livermore National Laboratory,
             7000 East Avenue, Mail Stop L-560,
             Livermore, CA 94550, U.S.A.}
             \email{panayot@llnl.gov}

\keywords{preconditioning, nonsymmetric matrices, normal matrix form, 
additive Schwarz method}

\subjclass{65F10, 65N20, 65N30}
 
 \date{\today}
 \thanks{The work of the first author was partially supported by Spanish MEC project MTM2011-27739-C04-04. The work of the second and third authors was performed under the auspices of the U.S. Department of Energy by Lawrence Livermore National Laboratory under Contract DE-AC52-07NA27344.}

\begin{document}
 
\begin{abstract}
We present and analyze a class of nonsymmetric preconditioners within a normal (weighted least-squares) 
matrix form for use in GMRES to solve  
nonsymmetric matrix problems that typically arise in finite element discretizations.  
An example of the additive Schwarz method applied to nonsymmetric but definite matrices 
is presented for which the abstract assumptions are verified. 
A variable preconditioner, combining the original nonsymmetric one and a  
weighted least-squares version of it, is shown to be convergent and provides a 
viable strategy for using nonsymmetric preconditioners in practice. 
Numerical results are included to assess the theory and the performance of the proposed preconditioners. 
\end{abstract}
\maketitle
\section{Introduction}

The numerical approximation of most phenomena in science and technology requires the solution of linear or nonlinear algebraic systems. 
Preconditioning is one of the main techniques that combined with a proper iterative method allows for reducing substantially the cost of solving those systems. Many efforts are usually devoted to design proper preconditioning strategies that allow for efficient and fast solution of the resulting algebraic systems \cite{toselli-libro,vassi-book}. The development of preconditioners is very often guided by the properties of the underlying problem and it sometimes can even dictate particular aspects that should be accounted for, when devising the numerical discretization of the continuous problem (as for instance in \cite{fosls0,fosls1}).\\


Even for linear problems, the design and analysis of preconditioners for the linear systems is far from being complete. For symmetric and coercive problems, a reasonable discretization yields a linear system $A_0 \bx = \bb$ with $A_0$ symmetric and positive definite (s.p.d). In such cases, as it is well known, the spectral information of the matrix itself dictates completely the convergence of the method. Therefore a preconditioner $B_0$ would be uniform (and could be turned into an optimal preconditioner) if it captures completely such spectral information; in other words,  if $B_0$ is spectral equivalent to $A_0$. \\

However, for linear systems $A \bx = \bb$, with nonsymmetric coefficient matrix $A$, the design of effective preconditioners does not admit a general recipe, at least at the present time. Likewise, there is no general iterative solver, and furthermore, there is no general theory that could be used to explain the success of a particular preconditioner when it is indeed efficient. In most cases, the spectral information does not provide significant information that could guide the development of any good preconditioner. Field of values have shown some utility in certain circumstance, but have also shown many limitations~\cite{ernst, Starke, klawon0}.  At the moment it might seem that  each particular problem has to be studied separately and a problem-dependent, discretization-dependent  preconditioning strategy had to be devised. Even when such preconditioning can be designed, its understanding and analysis are tasks that in most cases are out of reach.\\

In this paper, we focus on a particular situation, where $A$ is nonsymmetric but still positive definite. 
The motivation and application comes from a nonsymmetric discontinuous Galerkin discretization of an elliptic problem \cite{IIPG04}. In \cite{paola-blanca1,paola-blanca2}, additive and multiplicative Schwarz preconditioners were developed for the solution of the resulting algebraic system. In both works, the authors show that the GMRES convergence theory cannot be applied for explaining the convergence since the preconditioned system does not satisfy the {\it sufficient} conditions required by such theory. However, such discretizations are used in practice and have already shown to have some advantages when approximating advection-diffusion problems~\cite{hsusch, am0} and more recently,  in the design of methods for some more complex nonlinear problems \cite{suli-ortner, suli2}. In \cite{az}, the authors introduce a solver methodology based on the idea of subspace correction for this type of discretization for elliptic problems (see also \cite{ahzz14} for the extension to problems with jump coefficients), providing the analysis of the resulting iterative methods without using any GMRES theory.  In this paper we want to examine, in a more general algebraic abstract framework (that in particular will apply to the type of methods discussed above), the issue of providing some convergence theory for a preconditioner based on the classical (but nonsymmetric) Schwarz preconditioner to be used within GMRES. The ultimate goal is to obtain some insight on how to improve and tune the preconditioner. \\

Here, in a first stage we consider two preconditioners for $A$;  a classical additive Schwarz preconditioner $B$, which is nonsymmetric, and a symmetric preconditioner $Z$ that basically uses actions of the additive Schwarz preconditioner $B$ and its adjoint. Both will be shown to have their pros and cons. For the former, the non-symmetry of $B$ and of $B^{-1}A$, precludes developing any theory from which to extract  either some a-priori information on the convergence or to provide some guidelines on how the preconditioner could be improved or even be designed. The latter, while allowing for developing a convergence theory, will be shown to be not the most efficient possible option, although the a-priori information on convergence could be of special value depending on the application. Other symmetrizing strategies  for classical Schwarz methods, different from the one introduced here, have been already considered in literature by other authors \cite{holst97}.\\ 

At first sight, the underlying message that one might obtain from the analysis of this first part, is that enforcing the symmetry of the preconditioner for a nonsymmetric matrix might result in the very end, in a {\it wasted effort}.  We believe this might be the case in many situations, and we also think it is relevant and important to point it out.  At the same time, we do believe that the results obtained for analyzing the preconditioner, are of independent interest (also because of its simplicity), and might  provide some basis  (as it had happened already here) or insights for further development of solvers for nonsymmetric systems. \\

In a second stage of the present paper, we introduce a variable preconditioner $\mathcal{B}$ that is constructed by considering a linear combination of two given (general) preconditioners $ B $ and $ Z $ so that in a sense it tries to integrate and exploit the best of each of them. 
We describe the construction of this variable preconditioner to be used in GMRES, explaining how the coefficients in its definition are determined at each iteration inside GMRES. We  show that from the construction of $\mathcal{B}$ we immediately can deduce (theoretically) a convergence estimate that guarantees better performance of the resulting solver at a fixed iteration. We demonstrate numerically  that the new preconditioner outperforms the symmetric preconditioner $Z$ and always converges faster. We also include the construction of a variant of the combined preconditioner, for which one can indeed guarantee faster convergence than for the GMRES using only the $Z$ preconditioner.  However, this variant is more expensive, and therefore, although convergence of the whole sequence of iterates can be shown, its advantages and its possible tuning with respect to the original combined preconditioner need further study and will be  subject of future research.\\

The theory is illustrated with extensive numerical experiments, in which we also study the performance of all the considered preconditioners. They  are all implemented in parallel to fully take advantage of having considered preconditioners based on additive Schwarz methods. 
In the numerical tests, we do observe that the combined preconditioner  requires less GMRES-iterations to achieve  convergence  than the the classical additive Schwarz preconditioner $B$. However, in this particular case, 
each iteration for the combined preconditioner is more costly, which in the end, makes $B$ perform slightly better in terms of execution time. From these observations, it might be inferred that the new combined preconditioner $\mathcal{B}$ might be more competitive in settings where each iteration is expensive, so that the savings in iteration count can make up for the high cost per iteration.\\

Although we have focused on the nonsymmetric but positive definite case, we believe the ideas presented in the paper might be useful and possibly extended to more complex problems, including the indefinite case. This issue will be subject of future research.\\

The proposed strategy of the use of combined preconditioner inside GMRES is certainly related to the {\it Flexible} GMRES (FGMRES) introduced in \cite{fgmres00}, and the more recent works on {\it Multi-preconditioned} GMRES (MGMRES) \cite{daniel0,daniel1}. 
The idea of using several preconditioners instead of only one with GMRES has been already introduced with different viewpoints (and still different from the one considered here).  Saad proposed to use a different preconditioner at each GMRES iteration in \cite{fgmres00}, giving rise to what is coined today as {\it Flexible} GMRES (FGMRES). For an earlier result that uses preconditioners that can change at every iteration step (referred to as a variable--step preconditioners), we refer to \cite{AV91}. More recently, it has been proposed the {\it Multi- Preconditioned} GMRES (MPGMRES), as a further evolution of the multi-preconditioned CG method introduced in \cite{greif}. The idea of both works is to enlarge the Krylov spaces over which GMRES minimizes the residual norm, by considering an optimal combination of different preconditioners, and therefore using several preconditioners simultaneously at each iteration.
The authors have considered in \cite{daniel0} complete (using all possible search directions generated by the different preconditioners) and truncated (where  some directions are discarded to reduce the cost)  versions of the MPGMRES. 
While the strategy proposed in this paper certainly has similarities with these previous works, it has also many differences, including the motivation. 
Unlike the present paper, the particular preconditioners used in \cite{daniel0, daniel1} do not play any particular role. Here, the main motivation comes from the observation that for some discretizations frequently used in applications, the simplest possible domain decomposition preconditioner cannot be guaranteed to be convergent. A symmetric version of it, the preconditioner $Z$ can be shown to converge but its performance seems to be far from being optimal. Therefore, the idea of trying to construct an optimal combination of both that could retain the good  performance of the nonsymmetric preconditioner and for which some convergence estimate could be given (even if it would be just an upper bound).  Here, the focus is on combining in optimal way very specific preconditioners rather than allowing any preconditioner. 
The possible comparison and further combination of both approaches (the combined preconditioned and the MPGMRES) are left as a subject of future research.

The outline of the paper is as follows. Section \ref{sec:2} contains a description of the problem and the original motivation of it.
  In Section \ref{sec:3}, we construct the preconditioner $Z$ and present the convergence analysis. The combined preconditioner and its variant are introduced and analyzed in Section \ref{sec:4}. Finally in Section \ref{sec:5}, we consider a particular application and we provide numerical experiments that verify the developed theory and assess the performance of the preconditioner.

 \section{Problem formulation and Basic Notation}\label{sec:2}
We are interested in preconditioning a given system of linear equations 
\begin{equation}\label{system:0}
A \bx = \bb, \qquad A\in \mathbb{R}^{n\times n} \quad \bx,\, \bb \in \mathbb{R}^{n}\;,
\end{equation}
with $A$ being non--symmetric but definite and $n$  is assumed to be large. For the applications we have in mind, $A$ comes from a finite element  discretization of some partial differential operator and therefore is sparse and structured. 
With a small abuse of notation, throughout the paper we will use (the same notation) $A$ to denote both an operator and its matrix representation, since it will be clear from the context in all circumstances.
 
 We also assume that $A$ is ill-conditioned and that therefore a good preconditioner $B$ is required to solve efficiently  system \eqref{system:0} by an iterative method.
A simple option 
is to construct such $B$ as the classical additive Schwarz preconditioner coming from $A$. More precisely,  we denote by $I_k$, $k=1,\;\dots,\; N_s $, a set of rectangular matrices, such that $I_k$  extends a local vector $\bv_k$ to a global vector 
$I_k \bv_k$ with zero entries outside its index set. Also, let $I_c$ be an interpolation matrix that maps a coarse vector $\bv_0 = \bv_c$ 
to a global vector $I_c \bv_c$. Then, the additive Schwarz preconditioner exploits the local matrices $A_k = I^T_k A I_k$,
principal submatrices of $A$, and the coarse matrix $ A_c$ defined as $A_c=I^T_c A I_c$. The inverse of the additive 
Schwarz preconditioner $B$ takes the following familiar form:
\begin{equation}\label{prec:B}
B^{-1} = I_c A^{-1}_c I_c^T + \sum\limits^{N_s}_{k=1} I_k A^{-1}_k I^T_k\;. 
\end{equation}
Obviously, since $A$ is nonsymmetric the resulting additive 
Schwarz preconditioner $B$ is also nonsymmetric. Therefore, for the solution of the resulting preconditioned system $AB^{-1} \bu = \bb, B^{-1} \bu = \bx$, one has to use any of the iterative methods for nonsymmetric systems, such as the {\it Generalized Minimal Residual} (GMRES). For analyzing the convergence of the resulting iterative method (for the preconditioned system) one has to resort to one of the available and non-optimal GMRES theories. In the Domain Decomposition framework, the  GMRES convergence theory of Eisenstat et. al. \cite{elman} is generally used. 
In particular, to derive (a-priori) any conclusion on the performance of the preconditioner $B$, this theory requires some control on the coercivity of $AB^{-1}$  (in some inner product). Therefore, at least in theory,  using $B$ directly as a preconditioner for $A$ might not be successful.\\

Still, we would like to utilize the actions of $B^{-1}$ to define a preconditioner, say $Z$,  for $A$, for which some  bounds on the rate of convergence can be a-priori determined. 
In the next section, we show how such a preconditioner $Z$ can be constructed (and analyzed) by exploiting the fact that although $A$ is nonsymmetric, it is positive definite in some inner product. We also compare numerically, in Section \ref{sec:5}, the performance of the constructed  preconditioner $Z$ with the original nonsymmetric additive Schwarz preconditioner $B$. As we will show, even if a theory can be developed for $Z$ it might not be the most efficient option.\\

We now state our basic assumption regarding the matrix $A$. More specifically, we assume that there is an s.p.d. matrix $A_0$ such that $A$ and $A_0$ are related by the following basic assumption:\\

\vskip 0.2cm
{\bf Assumption (H0):} {\it Let $A\, \in\, \mathbb{R}^{n\times n}$ be nonsymmetric but definite and let $A_0\, \in \mathbb{R}^{n\times n} $ be s.p.d.
We say that the pair of matrices $(A\, ,\, A_0)$  satisfy 
{\bf Assumption (H0)} with constants $(c_0,c_1)$ if they do satisfy the following coercivity and boundedness estimates:
\begin{equation}\label{A:coercivity}
\bv^TA \bv \ge c_0 \; \bv^T A_0 \bv\quad \text{for all }\bv \in \mathbb{R}^{n}\;,
\end{equation}
\begin{equation}\label{A:boundedness}
\bw^TA\bv \le c_1 \; \sqrt{\bv^TA_0 \bv}\sqrt{\bw^TA_0 \bw} \quad \text{ for all }\bv,\bw \, \in\, \mathbb{R}^{n}\;.
\end{equation}
}
\vskip 0.2cm

Apart from the above assumptions, the matrix $A$ we consider is assumed to be highly non-normal, which precludes the use of iterative algorithms and GMRES convergence theories that exploit the eigenvalue information.

\section{An abstract result}\label{sec:3}
In this section we present the construction and give the analysis of a preconditioner for $A$ that basically only uses the actions of the additive Schwarz method. We start by proving two Lemmas that will be required for our subsequent analysis and derivation.\\

The next Lemma shows that for any pair of matrices $(A,A_0)$ satisfying {\bf (H0)} with constants $(c_0,c_1)$, the corresponding pair $(A^{-1},A_0^{-1})$ (consisting of their inverses)  also satisfies  {\bf (H0)} with constants $(c_3,c_4)$ that depend only on $c_0$ and $c_1$.

\begin{lemma}\label{le:0}
Let $A\, \in\, \mathbb{R}^{n\times n}$ be nonsymmetric but definite and let $A_0\, \in \mathbb{R}^{n\times n} $ be s.p.d
Let $(A, A_0)$  be a pair of matrices that satisfy assumption {\bf (H0)} with constants $(c_0,c_1)$ (in particular, $A\, \in\, \mathbb{R}^{n\times n}$ is nonsymmetric but definite and $A_0\, \in \mathbb{R}^{n\times n} $ is s.p.d). Then,
the pair  $(A^{-1}, A^{-1}_0)$ also satisfies assumption {\bf (H0)} with constants $\left(\frac{c_0}{c_1^{2}},c^{-1}_0\right)$; that is,
\begin{equation}\label{le0:1}
\bv^T A^{-1} \bv \ge \frac{c_0}{c_1^2}\; \bv^T A^{-1}_0 \bv, \quad \text{for all }\bv \in \mathbb{R}^{n},
\end{equation}
\begin{equation}\label{le0:2}
\bw^T A^{-1} \bv \le \frac{1}{c_0}\; \sqrt{\bv^T A^{-1}_0 \bv} \sqrt{\bw^T A^{-1}_0 \bw}, \quad \text{ for all }\bv,\bw \in \mathbb{R}^{n}.
\end{equation}
\end{lemma}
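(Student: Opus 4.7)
The plan is to use a duality substitution: for any vector $\mathbf{u}$, set $\mathbf{v}=A^{-1}\mathbf{u}$, so that the scalar identity
\[
\mathbf{u}^T A^{-1}\mathbf{u} = \mathbf{v}^T A^T A^{-1} A\mathbf{v} = \mathbf{v}^T A^T\mathbf{v} = \mathbf{v}^T A\mathbf{v}
\]
transfers the question about $A^{-1}$ into one about $A$, for which \eqref{A:coercivity}--\eqref{A:boundedness} can be invoked. The key then is to compare $\mathbf{u}^T A_0^{-1}\mathbf{u}=(A\mathbf{v})^T A_0^{-1}(A\mathbf{v})$ with $\mathbf{v}^T A_0\mathbf{v}$ in the right direction.

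The first and crucial step is an auxiliary estimate
\[
(A\mathbf{v})^T A_0^{-1}(A\mathbf{v}) \le c_1^{2}\,\mathbf{v}^T A_0 \mathbf{v}.
\]
I would obtain this by choosing the specific test vector $\mathbf{w}=A_0^{-1}A\mathbf{v}$ in the boundedness estimate \eqref{A:boundedness}. With that choice, both $\mathbf{w}^T A\mathbf{v}$ and $\mathbf{w}^T A_0 \mathbf{w}$ collapse to $(A\mathbf{v})^T A_0^{-1}(A\mathbf{v})$ (using that $A_0^{-1}$ is symmetric), so \eqref{A:boundedness} yields a self-bounding inequality from which the factor squares out. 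Identifying this clever test vector is, in my view, the main non-routine step of the proof; everything else follows by direct substitution.

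For the coercivity estimate \eqref{le0:1}, given $\mathbf{u}$, write $\mathbf{v}=A^{-1}\mathbf{u}$ and chain
\[
\mathbf{u}^T A^{-1}\mathbf{u} = \mathbf{v}^T A\mathbf{v} \;\ge\; c_0\,\mathbf{v}^T A_0\mathbf{v} \;\ge\; \frac{c_0}{c_1^{2}}\,(A\mathbf{v})^T A_0^{-1}(A\mathbf{v}) = \frac{c_0}{c_1^{2}}\,\mathbf{u}^T A_0^{-1}\mathbf{u},
\]
using \eqref{A:coercivity} and the auxiliary estimate.

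For the boundedness estimate \eqref{le0:2}, set $\mathbf{x}=A^{-1}\mathbf{v}$, so that $\mathbf{w}^T A^{-1}\mathbf{v}=\mathbf{w}^T\mathbf{x}$. A Cauchy--Schwarz step in the $A_0$-inner product gives
\[
\mathbf{w}^T\mathbf{x} \le \sqrt{\mathbf{w}^T A_0^{-1}\mathbf{w}}\,\sqrt{\mathbf{x}^T A_0\mathbf{x}},
\]
so I only need $\mathbf{x}^T A_0 \mathbf{x}\le c_0^{-2}\,\mathbf{v}^T A_0^{-1}\mathbf{v}$. This follows from two applications of \eqref{A:coercivity}: first $\mathbf{x}^T A_0\mathbf{x}\le c_0^{-1}\mathbf{x}^T A\mathbf{x}=c_0^{-1}\mathbf{v}^T A^{-1}\mathbf{v}$, and then a Cauchy--Schwarz step $\mathbf{x}^T A\mathbf{x}=\mathbf{x}^T A_0^{1/2}A_0^{-1/2}A\mathbf{x}\le\sqrt{\mathbf{x}^T A_0\mathbf{x}}\sqrt{(A\mathbf{x})^T A_0^{-1}(A\mathbf{x})}$ combined again with \eqref{A:coercivity} yields $\mathbf{v}^T A^{-1}\mathbf{v}\le c_0^{-1}\mathbf{v}^T A_0^{-1}\mathbf{v}$. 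Multiplying out gives exactly the claimed constant $1/c_0$.
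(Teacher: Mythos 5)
Your proof is correct, and it is essentially the paper's own argument in a different notation: the paper first passes to the symmetrized matrix $Y = A_0^{-1/2}AA_0^{-1/2}$ and proves $\|Y^{-1}\|\le c_0^{-1}$ and $\bv^TY^{-1}\bv\ge(c_0/c_1^2)\|\bv\|^2$ via exactly the self-bounding test-vector choices you use (your auxiliary estimate is $\|Y\|\le c_1$, your $\bw=A_0^{-1}A\bv$ is $\bw'=Y\bv'$, etc.), so under the change of variables $\bv\mapsto A_0^{1/2}\bv$ the two proofs coincide step for step. Keeping $A,A_0$ explicit is fine; introducing $Y$ merely streamlines the bookkeeping.
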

\begin{proof}
We first show the boundedness estimate \eqref{le0:2}. We define the matrix $Y: = A^{-\frac{1}{2}}_0 A A^{-\frac{1}{2}}_0$. Then,  \eqref{A:coercivity} and \eqref{A:boundedness}  imply (or read) that $Y$ satisfies:
\begin{align}
\bv^T Y \bv &\ge c_0 \; \|\bv\|^2,  \qquad  \text{for all}\, \bv \in \mathbb{R}^{n}\;, && \label{le0:3}\\
\bw^T Y \bv &\le \; c_1 \|\bv\| \|\bw\|, \qquad  \text{for all}\, \bv, \bw\, \in \mathbb{R}^{n}\;. \label{le0:4}&&
\end{align}
The positivity  \eqref{le0:3} of $Y$  guarantees the existence of $Y^{-1}$ and so taking  $\bv:= Y^{-1} \bw$ in \eqref{le0:3}, and using the symmetry of the standard $\ell^{2}$-inner product of two vectors together with the Cauchy-Schwarz inequality, we find
\begin{equation*}
c_0\;\|Y^{-1} \bw\|^2 \le \bw^T Y^{-T} \bw = \bw^T Y^{-1} \bw \le \|\bw\| \|Y^{-1} \bw\|,
\end{equation*}
which shows that $\|Y^{-1} \bw \| \le \frac{1}{c_0}\; \|\bw\|$, that is, the boundedness of $Y^{-1}$ in the $\ell^{2}$-norm:
\begin{equation}
\|Y^{-1}\| \le \frac{1}{c_0}.
\end{equation}
 In other words we have shown that
\begin{equation*}
\bw^T A^{\frac{1}{2}}_0 A^{-1} A^{\frac{1}{2}}_0 \bv = \bw^T Y^{-1} \bv \le \frac{1}{c_0}\; \|\bv\| \|\bw\| \quad \forall\, \bv,\bw \in \mathbb{R}^{n}\;.
\end{equation*}
Setting now in the above equation $\bv := A^{-\frac{1}{2}}_0 \bv$ and $\bw := A^{-\frac{1}{2}}_0 \bw$,  we reach the desired boundedness estimate \eqref{le0:2} for $A^{-1}$ in terms of $A^{-1}_0$.\\

The positivity estimate \eqref{le0:1} can be shown as follows. 
On the one hand, the boundedness  \eqref{le0:4} of $Y$  with $\bv=\bv$ and $\bw = Y^{-1} \bv$ gives
\begin{equation*}
\|\bv\|^2 = \bv^T Y (Y^{-1} \bv) \le 
c_1 \; \|Y^{-1} \bv\| \|\bv\| \qquad \forall\, \bv \in \mathbb{R}^{n}\;.
\end{equation*}
which readily implies
\begin{equation}\label{le0:5}
 \|Y^{-1} \bv\| \geq \frac{1}{c_1} \|\bv\| \qquad \forall\, \bv \in \mathbb{R}^{n}\;.
 \end{equation}
  On the other hand, using the positivity estimate \eqref{le0:3} of $Y$, we have
\begin{equation*}
\bv^T Y^{-1} \bv = (Y^{-1} \bv)^T Y^T (Y^{-1} \bv) =  (Y^{-1} \bv)^T Y (Y^{-1} \bv) \ge c_0 \; \|Y^{-1} \bv\|^2 \quad \forall\, \bv \in \mathbb{R}^{n}\;.
\end{equation*}
Then, the above relation together with estimate \eqref{le0:5} give the following  positivity estimate for $Y^{-1}$ :
\begin{equation*}
\bv^T Y^{-1} \bv \ge \frac{c_0}{c_1^{2}}\; \|\bv\|^2 \quad \forall\, \bv \in \mathbb{R}^{n}\;.
\end{equation*}
Now, setting in the above estimate
 $\bv := A^{-\frac{1}{2}}_0 \bv$, we obtain the coercivity relation \eqref{le0:1} and conclude the proof.
\end{proof}

\vskip 0.2cm

Next, let $B_0$ denote the  s.p.d. additive Schwarz preconditioner of $A_0$, whose inverse is defined as:
\begin{equation}\label{prec:B0}
B_0^{-1} = I_c A^{(0)^{-1}}_c I_c^T + \sum\limits^{N_s}_{k=1} I_k A^{(0)^{-1}}_k I^T_k\;. 
\end{equation}

Note that since $(A,A_0)$ satisfy assumption {\bf (H0)} with constants $(c_0,c_1)$, this immediately implies that for each $k=1,\ldots N_s$ the family of pairs $(A_k, A^{(0)}_k)$ with matrices defined by 
\begin{equation*}
A_k := I^T_k A I_k \quad \mbox{and } \quad A^{(0)}_k := I^T_kA_0 I_k \qquad k=1,\ldots N_s\;,
\end{equation*}
 also satisfy assumption {\bf (H0)} with the same constants. The same is also true for the {\it coarse} pair of matrices $(A_c, A^{(0)}_c)$, where $A_c=I_c^T A I_c$ and $A^{(0)}_c=I_c^T A_0 I_c$.
Then, applying Lemma \ref{le:0} to each of these pairs, we have that the corresponding pair of their 
respective 
inverses and hence the pair with the product matrices $\left(I_k A^{-1}_k I^T_k\, , \, I_k A^{(0)^{-1}}_k I^T_k\right)$ satisfies {\bf (H0)} with constants $(c_0c_1^{-2}, c^{-1}_0)$ (i.e, \eqref{le0:1} and \eqref{le0:2}).  The latter implies that the inverses of the 
additive Schwarz preconditioners $B^{-1}$ (as defined in \eqref{prec:B}) and  $B^{-1}_0$ (as defined in \eqref{prec:B0}), are related in the same way (as their individual terms
$I_k A^{-1}_k I^T_k$ and $I_k A^{(0)^{-1}}_k I^T_k$). That is: $(B^{-1},B_0^{-1})$ also satisfy {\bf (H0)} with constants $(c_0c_1^{-2}, c^{-1}_0)$.
Applying once more Lemma \ref{le:0}, we straightaway deduce that the pair $(B,B_0)$ also satisfy {\bf (H0)}, now with constants $(c_0^{3}c_1^{-2},c_1^{2}c_0^{-1})$.\\ 

Now, since $B_0$ is the classical s.p.d. additive Schwarz preconditioner for the s.p.d $A_0$, $B_0$ and $A_0$ can be shown to be spectrally equivalent: there exists $\gamma_0,\gamma_1>0$ such that
\begin{equation}\label{b0:a0}
 \gamma_0 \bv^{T}B_0 \bv \leq  \bv^{T}A_0 \bv \leq \gamma_1 \bv^{T} B_0 \bv \quad \forall\, \, \bv \in \mathbb{R}^{n}\;,
 \end{equation}
where the constants $\gamma_0$ and $\gamma_1$  might depend on the parameters of the discretization and the problem. $B_0$ would be optimal if neither $\gamma_0$ nor $\gamma_1$ depend on the discretization parameters (or size of the system $n$).

Using this extra information, it is straightforward to deduce that the pair $(B,A_0)$ also satisfy {\bf (H0)} with constants $(\beta_0,\beta_1)$ that depend only on $c_0,c_1,\gamma_0$ and $\gamma_1$. 
All these observations are summarized in the following Lemma:\\
\begin{lemma}\label{le:1}
Let $(A,A_0)$ satisfy assumption {\bf (H0)} with constants $( c_0,c_1)$.
Let $B$ be the additive Schwarz preconditioner of $A$ (defined through \eqref{prec:B}) and let $B_0$ be the corresponding s.p.d additive Schwarz preconditioner of $A_0$ (defined through \eqref{prec:B0}) and assume $B_0$ is such that
\begin{equation*}
\gamma_0 \bv^{T}B_0 \bv \leq  \bv^{T}A_0 \bv \leq \gamma_1 \bv^{T} B_0 \bv, \quad \forall\, \, \bv \in \mathbb{R}^{n}\;,
 \end{equation*}
for some  $\gamma_0,\gamma_1>0$. Then, the pair $(B,A_0)$ also satisfies {\bf (H0)} with constants $(\beta_0,\beta_1)$:
 \begin{equation}\label{B:coercivity}
\bv^TB \bv \ge \beta_0\; \bv^TA_0 \bv,\quad \text{ for all } \bv \in \mathbb{R}^{n}\;.
\end{equation}
and
\begin{equation}\label{B:boundedness}
\bw^T B \bv \le \beta_1\; \sqrt{\bv^TA_0\bv} \sqrt{\bw^TA_0\bw},\quad \text{ for all }\bv,\;\bw.
\end{equation}
The constants $\beta_0$ and $\beta_1$ are given by
 \begin{equation}\label{betas}
 \beta_0=\frac{c_0^{3}}{c_1^{2}\gamma_1}
  \qquad \beta_1=\frac{c_1^{2}}{c_0\gamma_0}\;.
 \end{equation}
We note that the same results, \eqref{B:coercivity}-\eqref{B:boundedness}, 
hold for $B$ replaced with $B^T$. 
 \end{lemma}
\vskip 0.3cm

With all this relations at hand, we define the s.p.d. matrix
\begin{equation}\label{prec:Z}
Z:=B A_0^{-1}B^T\;.
\end{equation}
that can be used as a preconditioner for $A$ in GMRES. Observe that the actions of $Z^{-1}$ involve actions of both $B^{-1}$ and $B^{-T}$ as well as 
multiplications with $A_0$ (not $A^{-1}_0$). Therefore, the preconditioner $Z$ is computationally feasible. \\

We next prove the main result of the section, which guarantees that the preconditioned GMRES method for $A$ with the s.p.d. preconditioner $Z = B A^{-1}_0 B^T$ will be convergent with bounds depending only on the constants involved in relations between $A$ and $Z$.

\begin{theorem}\label{theorem: convergence for Z}
Let $(A,A_0)$ satisfy assumption {\bf (H0)} with constants $(c_0,c_1)$ and let  $B \in \mathbb{R}^{n\times n}$ be the additive Schwarz preconditioner for $A$, whose inverse is defined through \eqref{prec:B}. Let $Z:=BA^{-1}_0 B^T$ be a preconditioner for $A$. Then, the pair $(A,Z)$ also satisfies {\bf (H0)} with constants $(\alpha_0,\alpha_1)$ defined by:
\begin{equation}\label{def:alpha}
\alpha_0=\frac{c_0}{\beta_1}=\frac{c^{2}_0}{c_1^{2}}\cdot \gamma_0 \qquad \alpha_1=\frac{c_1}{\beta_0}=\frac{c^{2}_1}{c_0^{3} \gamma_1}\;.
\end{equation}
Furthermore, the preconditioned GMRES method for $A$ with the s.p.d. preconditioner
$Z$ converges  with bounds:
\begin{equation}\label{estimate:0}
\|\br_m\|_{Z^{-1}} =
\|{\overline \br}_m\|_{Z} \leq \left(1-\left (\frac{\alpha_0}{\alpha_1} \right )^2 \right)^{\left (\frac{m}{2} \right )} 
\|{\overline \br}_0\|_{Z}=
\left(1- \left (\frac{\alpha_0}{\alpha_1} \right )^2 \right)^{\left (\frac{m}{2} \right )} 
\|\br_0\|_{Z^{-1}}\,,
\end{equation}
where  ${\overline \br}_m = Z^{-1} \br_m = Z^{-1} (\bb - A \bx_m)$ is the preconditioned 
residual at the $m$-th iteration with ${\overline \br}_0= Z^{-1} \br_0 :=
Z^{-1}(\bb-A \bx_0)$; $\|\cdot\|_{Z}$ and $\|\cdot\|_{Z^{-1}}$ are the inner-product norms induced by 
the s.p.d matrices $Z$ and $Z^{-1}$, respectively. 

If we use GMRES with right preconditioning in the standard norm, i.e. $\|\bv\|=  \sqrt{\bv^T \bv}$, the following estimate holds
\begin{equation}\label{estimate-right} 
\|\br_m\| \le \left (\|Z\|\|Z^{-1}\| \right )^{\frac{1}{2}}\; 
\left(1- \left (\frac{\alpha_0}{\alpha_1} \right )^2 \right)^{\left (\frac{m}{2} \right )} 
\|\br_0\|.
\end{equation}
Note that in our case $\text{cond}(Z) = \|Z\|\|Z^{-1}\|\simeq \text{cond}(A_0) =  \O(h^{-2})$.
\end{theorem}
\begin{proof}
From Lemma \ref{le:1}, we know that $(B,A_0)$ satisfy {\bf (H0)} with $(\beta_0,\beta_1)$. In particular, the relations \eqref{B:coercivity}--\eqref{B:boundedness} (used for $B^T$) show that $X := A^{-\frac{1}{2}}_0 B^T A^{-\frac{1}{2}}_0$ is well--conditioned.
More precisely, we have
\begin{equation*}
\beta_0\; \|\bv \|^{2} \le \|X \bv \|^{2} \le \beta_1\; \|\bv\|^{2} \quad \text{ for all } \bv\in \mathbb{R}^{n}\;.
\end{equation*}
That is, the s.p.d. matrix $X^TX$ is well--conditioned. 
The coercivity of $A$ in terms of $A_0$ expressed in \eqref{A:coercivity}, and $X^TX$ being well--conditioned (or, bounded) imply that 
$A^{-\frac{1}{2}}_0AA^{-\frac{1}{2}}_0$ is coercive also in terms of $X^T X$:
\begin{equation*}
\bv^{T} A^{-\frac{1}{2}}_0AA^{-\frac{1}{2}}_0\bv \ge c_0 \|\bv\|^{2} \geq \frac{c_0}{\beta_1}  \bv^{T} \, X ^{T}X \bv \quad \text{ for all } \bv\in \mathbb{R}^{n}\;.
\end{equation*}
Hence, $A$ is coercive in terms of 
$A^{\frac{1}{2}}_0 X^TX A^{\frac{1}{2}}_0 = BA^{-1}_0 B^T = Z$, which is the first desired result. 

Similarly, the boundedness of $A$ in terms of $A_0$, expressed in \eqref{A:boundedness}, and $X^TX$ being well--conditioned (or coercive)  imply that 
$A^{-\frac{1}{2}}_0AA^{-\frac{1}{2}}_0$ is bounded also in terms of $X^T X$:
\begin{equation*}
\bw^{T} A^{-\frac{1}{2}}_0AA^{-\frac{1}{2}}_0\bv  \leq  c_1 \|\bw\| \|\bv\| \leq  \frac{c_1}{\beta_0} \sqrt{ \bw^{T} X ^{T}X \bw} \, \sqrt{\bv^T X ^{T}X \bv}, \quad \text{ for all } \bv,\bw\, \in \mathbb{R}^{n}\;,
\end{equation*}
 which is equivalent to say that
$A$ is bounded in terms of $A^{\frac{1}{2}}_0 X^TX A^{\frac{1}{2}}_0 = BA^{-1}_0 B^T=Z$. This completes the 
proof that the pair $(A, Z)$ verifies assumption {\bf (H0)} with constants $(\alpha_0,\alpha_1)$ as defined in  \eqref{def:alpha}.\\

Since our preconditioner $Z$ is s.p.d., we can apply the (unpreconditioned) GMRES
 in the standard vector norm to the symmetrically transformed problem
${\overline A} {\overline \bx} = {\overline \bb}$, where
${\overline A} = Z^{-\frac{1}{2}} A Z^{-\frac{1}{2}}$, ${\overline \bb} = Z^{-\frac{1}{2}} \bb$ and 
${\overline \bx} = Z^{\frac{1}{2}} \bx$. After rewriting the resulting algorithm in the original quantities
it is clear that the resulting method will minimize the (true) residual in $\|.\|_{Z^{-1}}$-norm.
A standard application of the GMRES convergence theory \cite{elman} gives \eqref{estimate:0}.

If we were to apply GMRES to $A \bx = \bb$ using e.g., right preconditioning (cf., \cite{SaadBook03}, \S~3.2.2)),
 then estimate \eqref{estimate-right} is easily seen since
\begin{equation*}
\begin{array}{rl}
\|\br_m\| &=\min\;\{\|p_m(AZ^{-1})\br_0\|:\; p_m(0) =1,\;p_m \text{ polynomial of degree }\le m\}\\
&\le \|Z^{\frac{1}{2}}\|\; \min\;\{\|Z^{-\frac{1}{2}}p_m(AZ^{-1})\br_0\|:\; p_m(0) =1,\;p_m \text{ polynomial of degree }\le m\}\\
& = \|Z^{\frac{1}{2}}\| \; \min\;\{\|p_m(Z^{-\frac{1}{2}}AZ^{-\frac{1}{2}})(Z^{-\frac{1}{2}}\br_0)\|:\; p_m(0) =1,\;p_m \text{ polynomial of degree }\le m\}\\
& \le 
\|Z^{\frac{1}{2}}\| \;
\left(1- \left (\frac{\alpha_0}{\alpha_1} \right )^2 \right)^{\left (\frac{m}{2} \right )} 
\|Z^{-\frac{1}{2}}\br_0\| \\
& \le \|Z^{\frac{1}{2}}\| \;\|Z^{-\frac{1}{2}}\| \;
\left(1- \left (\frac{\alpha_0}{\alpha_1} \right )^2 \right)^{\left (\frac{m}{2} \right )} 
\|\br_0\|.
\end{array}
\end{equation*}
Thus the proof of the theorem is complete.

\end{proof}

\subsection{ Another auxiliary s.p.d. preconditioner $W$}

To close the section we define another  s.p.d. preconditioner, that we shall  denote by $W$,  and that uses also only actions of $B^{-1}$ and $B^{-T}$:  
\begin{equation}\label{prec:W}
W^{-1} =\frac{1}{2}(B^{-1} + B^{-T}).
\end{equation}

Observe that  estimates \eqref{B:coercivity} and \eqref{B:boundedness} from Lemma \ref{le:1}, show that the pairs $(B,A_0)$ and $(B^{T},A_0)$  satisfy {\bf (H0)} with constants $(\beta_0, \beta_1)$ as given in \eqref{betas}. Therefore, both pairs satisfy the assumptions of Lemma \ref{le:0} from which we deduce that the pairs $(B^{-1}, A^{-1}_0)$  and $(B^{-T}, A^{-T}_0)$ are also related through {\bf (H0)} with constants $(\frac{\beta_0}{\beta^{2}_1}, \beta_0^{-1})$. 
This implies in particular, that the pair $(W, A_0)$ exhibits property {\bf (H0)} which means that the preconditioner $W$ is s.p.d. and is spectrally equivalent to $A_0$. 

Arguing then as in the second part of the proof of Theorem \ref{theorem: convergence for Z} but with $W$ in place  of  $Z$, one can guarantee  that the GMRES method applied to $A$ with $W$ as an s.p.d.  preconditioner will be convergent. We omit the details for brevity.

\section{A combined preconditioner}\label{section: combined preconditioner}\label{sec:4}
In this section, we introduce another preconditioner which  in a sense combines the best of both preconditioners $B$ and $Z = B A^{-1}_0B^T$. We define its inverse $\mathcal{B}^{-1}$ by forming the linear combination
\begin{equation*}
\B^{-1} = B^{-1} +\sigma Z^{-1}.
\end{equation*}
The parameter $\sigma \in {\mathbb R}$ is allowed to change from iteration to iteration inside the GMRES iterative solver. Therefore, $\B$ can be regarded as a  variable--step (or flexible) preconditioner. \\

Observe that for $\sigma \ge 0$, by virtue of the analysis of the previous section, the pair $(\B^{-1},A^{-1}_0)$ verifies assumption {\bf (H0)}; i.e, $\B^{-1}$ is coercive and bounded in the $A^{-1}_0$ norm. 
We  now describe the (practical) construction of the variable--step preconditioner $\B^{-1}$, but considering a more general form:
\begin{equation}\label{b22}
\B^{-1}=\alpha B^{-1} +\sigma Z^{-1}
\end{equation}
without assuming  the coefficients $\alpha$ and $\sigma$ to have nonnegative sign. 
We note that since the coefficients $\alpha$ and $\sigma$ will be allowed  to vary from one step to the following, and so  will do $\B^{-1}$, certainly $\B^{-1}$ depend on the interation $m$ However, to avoid  complicated notation and to ease the presentation, we do not make explicit reference to the iteration $m$ in the notation of  $\B^{-1}$ and $\alpha$ and $\sigma$, since at all times it will be clear and no confusion might arise.

$\mathcal{B}$ depends on the parameters $\alpha$ and $\sigma$ and varies from one step to the next. We suggest writing $\mathcal{B}_{m}$ instead to show the explicit dependence on the iteration count, and the
same for the parameters  $\alpha$ and $\sigma$ that define $\mathcal{B}$

We first discuss the construction of the combined preconditioner and  provide a first  convergence result for $\B^{-1}$ which asserts faster convergence within GMRES than the one obtained with the preconditioner $Z^{-1}$ at a fixed iteration. 
In Section \ref{sec:42} we present a variant of the combined preconditioner for which one can indeed guarantee convergence with respect to  the GMRES method using only the $Z^{-1}$  (at the whole sequence of iterates rather that at a fixed one). 
\subsection{Construction of the variable preconditioner}\label{sec:41}

We consider the linear system of equations  \eqref{system:0} that we solve by the preconditioned GMRES method with preconditioner $\B^{-1}$ as defined in \eqref{b22}. We now explain how the coefficients are $\alpha$ and $\sigma$ set inside the GMRES iteration. 
Let $\| \cdot \| = \sqrt{(\cdot,\cdot)}$ and $\|\cdot\|_* = \sqrt{(.,.)_*}$ be two inner product norms, to be specified and chosen later on, and whose role will become clear in the process.\\

For $m\ge 0$, we denote by  $\bx_m$ the  $m^{th}$-iterate and by $\br_m = \bb - A \bx_m$ the residual.  At the  $(m+1)^{th}$ iteration of GMRES, we construct the new
search direction $\bd_{m+1}$  based not only on the previous search directions $\{\bd_j\}^m_{j=0}$ but also on the  two preconditioned residuals $B^{-1} \br_m$ and $Z^{-1} \br_m$, as follows:
\begin{equation}\label{eq:des:bes}
\beta_{m+1}\bd_{m+1} = \beta_{m+\frac{1}{3}} B^{-1} \br_m + \beta_{m+\frac{2}{3}} Z^{-1} \br_m
+ \sum\limits^m_{j=0}\beta_j \bd_j.
\end{equation}
Here,  the coefficients $\beta_j$, $j=0,1,\;\dots,\;m$
 are chosen such that 
 \begin{equation*}
 (\bd_{m+1},\bd_j)_* = 0 \quad \mbox{ for} \quad j<m+1 \qquad  \mbox{ and }\quad
\|\bd_{m+1}\|_* = \sqrt{(\bd_{m+1},\;\bd_{m+1})_*} = 1.
\end{equation*}
It is clear then, that the coefficients 
$\beta_{m+\frac{s}{3}}$, $s=1,2$ can be considered arbitrary parameters at this point. 
For any such fixed pair in GMRES, the next iterate $\bx_{m+1}$  is then computed by minimizing the residual:
\begin{equation*}
\|\bb - A \bx_{m+1}\| = \|\bb - A (\bx_m + \sum\limits^{m+1}_{j=0}\alpha_j \bd_j)\| \mapsto \min
\end{equation*}
with respect to the coefficients $\{\alpha_j\}^{m+1}_{j=0}$. Notice  that out of the 
two coefficients $\beta_{m+\frac{s}{3}}$, $s=1,2$, only their ratio 
\begin{equation*}
\sigma = \sigma_{m+1} \equiv \frac{\beta_{m+\frac{2}{3}}}
{\beta_{m+\frac{1}{3}}},
\end{equation*}
 can be considered as a free  parameter (the rest is compensated by the 
$\alpha_{m+1}$-coefficient). \\

In practice, we proceed as follows. At step $m+1$, based on the previous search directions
$\{\bd_j\}^m_{j=0}$ and the preconditioned residuals $B^{-1} \br_m$ and $Z^{-1} \br_m$, we have to solve the 
minimization problem: 
\begin{equation}\label{eq:min:2}
\|\bb - A (\bx_m + \sum\limits^m_{j=0}\alpha_j \bd_j + \alpha_{m+\frac{1}{3}}B^{-1} \br_m +
\alpha_{m+\frac{2}{3}}Z^{-1} \br_m)\| \mapsto \min\;,
\end{equation}
with respect to the coefficients $\{\alpha_j\}^m_{j=0}$, and $\alpha_{m+\frac{s}{3}}$, $s=1,2$. 
As we show next the solution of such minimization problem can be reduced to the solution of a  two-by-two system, by choosing appropriately the inner product $(\cdot ,\cdot )_*$. 

Consider the quadratic functional $\J(\balpha)$ (as a function of the coefficients $\balpha = (\alpha_r)$)
\begin{equation*}
\J(\balpha)\equiv\|\br_m - A( \sum\limits^m_{j=0}\alpha_j \bd_j + \alpha_{m+\frac{1}{3}}B^{-1} \br_m
+\alpha_{m+\frac{2}{3}}Z^{-1} \br_m)\|^2\;.
\end{equation*}
Then it is obvious that the minimization problem \eqref{eq:min:2} reduces to minimize the functional with respect to the coefficients $\balpha = (\alpha_r)$. We set now the inner product  $(\cdot,\cdot)_* = (A(\cdot),.A(\cdot))$, which is equivalent to assume that the search directions $\{\bd_j\}$ are $(A(.),A(.))$ orthogonal.
Then, we have 
\begin{equation*}
0= \frac{1}{2}\frac{\partial \J}{\partial \alpha_j} =
\alpha_j - (\br_m - \alpha_{m+\frac{1}{3}}A B^{-1} \br_m - \alpha_{m+\frac{2}{3}}AZ^{-1} \br_m,\;A\bd_j),\quad
\; j \le m,
\end{equation*}
which gives
\begin{equation}\label{equations for alpha_j}
\alpha_j = (\br_m,\;A\bd_j) - \alpha_{m+\frac{1}{3}}(A B^{-1} \br_m,\;A\bd_j)
 - \alpha_{m+\frac{2}{3}}(AZ^{-1} \br_m,\;A\bd_j),
\; j \le m.
\end{equation}
Setting now the partial derivatives of $\J$ w.r.t. $\alpha_{m+\frac{s}{3}}$ to zero, we get
\begin{equation}\label{equations for fractional alphas}
\begin{array}{rl}
\alpha_{m+\frac{1}{3}}\|A B^{-1} \br_m\|^2 - 
(\br_m - A( \sum\limits^m_{j=0}\alpha_j \bd_j 
+\alpha_{m+\frac{2}{3}}Z^{-1} \br_m,\; A B^{-1} \br_m) &=0,\\
\alpha_{m+\frac{2}{3}}\|A Z^{-1} \br_m\|^2 - 
(\br_m - A( \sum\limits^m_{j=0}\alpha_j \bd_j + \alpha_{m+\frac{1}{3}}B^{-1} \br_m),\; A Z^{-1} \br_m) &=0.
\end{array}
\end{equation}
Substituting $\alpha_j$ from \eqref{equations for alpha_j} into \eqref{equations for fractional alphas}
we  end up with a system of two equations where the only two unknowns are the coefficients
$\alpha_{m+\frac{s}{3}}$ with $s=1,2$:
\begin{equation}\label{particular system for fractional alphas}
\begin{array}{l}
\bullet \quad \alpha_{m+\frac{1}{3}}\left (\|AB^{-1}\br_m\|^2-\sum\limits_j (AB^{-1}\br_m,\;A\bd_j)^2
\right )\\
\qquad \qquad \quad+  \alpha_{m+\frac{2}{3}}\left ((AZ^{-1}\br_m,\;AB^{-1}\br_m)-\sum\limits_j (A Z^{-1}\br_m,\; A\bd_j)(A B^{-1}\br_m,\; A\bd_j)\right )\\
= (\br_m,\;AB^{-1}\br_m)-\sum\limits_j
(\br_m,\;A\bd_j) (AB^{-1}\br_m,\;A\bd_j)
= (\br_m,\; AB^{-1}\br_m -\sum\limits_j (AB^{-1}\br_m,\;A\bd_j)A\bd_j),\\
\bullet \quad\alpha_{m+\frac{1}{3}}\left ((AZ^{-1}\br_m,\;AB^{-1}\br_m)-\sum\limits_j (A B^{-1}\br_m,\; A\bd_j)(A Z^{-1}\br_m,\; A\bd_j)\right ) \\
\qquad \qquad \quad+ \alpha_{m+\frac{2}{3}}\left (\|AZ^{-1}\br_m\|^2-\sum\limits_j (AZ^{-1}\br_m,\;A\bd_j)^2
\right )\\
= (\br_m,\;AZ^{-1}\br_m)-\sum\limits_j
(\br_m,\;A\bd_j) (AZ^{-1}\br_m,\;A\bd_j)= (\br_m,\; AZ^{-1}\br_m -\sum\limits_j (AZ^{-1}\br_m,\;A\bd_j)A\bd_j).
\end{array}
\end{equation}
To show the solvability of the above system for $\alpha_{m+\frac{s}{3}}$ with $s=1,2$ (which will imply that the variable--step preconditioner $\B^{-1}$ is well defined), we use the following lemma.
\begin{lemma}\label{lemma: for projections}
Let $(H,\;(.,.))$ be a Hilbert space with inner product $(\cdot,\cdot)$ and let  $\bh,\;\bbf,\;\bg \in H$.
Let $S$ be a finite dimensional subspace of $H$ spanned by an orthonormal system $\{\bp_j\}^m_{j=1}$, i.e.,
$(\bp_i,\;\bp_j) = \delta_{i,j}$. Let $\pi=\pi_S: H\longrightarrow S$ be the orthogonal projection on $S$, with respect to the inner product $(\cdot,\cdot)$. Then, the best approximation to $\bh$ from elements from $S$ augmented by the two vectors $\bbf$ and $\bg$ is given as the
solution of the least squares (or minimization) problem
\begin{equation}\label{abstract least-squares problem}
\min_{
\substack{
\alpha_r\\
 r=\frac{1}{3},\frac{2}{3},\;1,\dots,\;m}}  \|\bh - \alpha_{\frac{1}{3}} \bbf - \alpha_{\frac{2}{3}} \bg - \sum\limits_j \alpha_j \bp_j\| \mapsto 
\min
\end{equation}
over the coefficients $\{\alpha_r\}$, $r=\frac{1}{3},\frac{2}{3},\;1,\dots,\;m$. Solving problem \eqref{abstract least-squares problem} is equivalent to solve the following two-by-two system
\begin{equation}\label{Gram system}
\left( \begin{array}{cccc}
&\|(I-\pi)\bbf\|^2 &\quad ((I-\pi)\bbf,\;(I-\pi)\bg) \\
&((I-\pi)\bbf,\;(I-\pi)\bg) & \|(I-\pi)\bg\|^2 
\end{array}\right)
\cdot \left[\begin{array}{cc}
\alpha_{\frac{1}{3}}\\
\alpha_{\frac{2}{3}}
\end{array}\right] = \left[\begin{array}{cc}
(\bh,\;(I-\pi)\bbf)\\
(\bh,\;(I-\pi)\bg)
\end{array}\right],
\end{equation}
which has a unique solution provided $(I-\pi)\bbf$ and $(I-\pi)\bg$ are linearly independent.
If $(I-\pi)\bbf$ and $(I-\pi)\bg$ are  linearly dependent, there is also a solution,
since the r.h.s. in \eqref{Gram system} is compatible.  More specifically, the component
\begin{equation*}
\bh^\perp = \alpha_{\frac{1}{3}} (I-\pi)\bbf + \alpha_{\frac{2}{3}}(I-\pi)\bg,
\end{equation*}
is unique including the case of linear dependent components $(I-\pi)\bbf$ and $(I-\pi)\bg$ 
(even when $(I-\pi)\bbf = (I-\pi)\bg = 0$.) \\
The remaining coefficients $\{\alpha_j\}$ are computed  from 
$\pi (\bh - \alpha_{\frac{1}{3}}\bbf - \alpha_{\frac{2}{3}}\bg) = \sum\limits_j \alpha_j \bp_j$, that is:
\begin{equation*}
\alpha_j = (\bh - \alpha_{\frac{1}{3}}\bbf - \alpha_{\frac{2}{3}}\bg, \;\bp_j).
\end{equation*}
\end{lemma}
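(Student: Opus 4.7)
The plan is to reduce the full $(m+2)$-dimensional least-squares problem to a two-dimensional one by first eliminating the coefficients $\alpha_j$ associated with the orthonormal basis $\{\bp_j\}$, and then writing the normal equations for what remains in terms of the residuals $(I-\pi)\bbf$ and $(I-\pi)\bg$.

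First I would fix $\alpha_{\frac{1}{3}}$ and $\alpha_{\frac{2}{3}}$ and view \eqref{abstract least-squares problem} as a minimization problem only over $\{\alpha_j\}_{j=1}^m$. Since $\{\bp_j\}$ is orthonormal, the optimal choice is the orthogonal projection onto $S$ of the vector $\bh - \alpha_{\frac{1}{3}}\bbf - \alpha_{\frac{2}{3}}\bg$, giving the explicit formula
\begin{equation*}
\alpha_j = (\bh - \alpha_{\frac{1}{3}}\bbf - \alpha_{\frac{2}{3}}\bg,\;\bp_j),
\end{equation*}
which already identifies the last displayed equation in the lemma. Substituting back, the residual becomes $(I-\pi)(\bh - \alpha_{\frac{1}{3}}\bbf - \alpha_{\frac{2}{3}}\bg)$, so the remaining task is to minimize
\begin{equation*}
\bigl\|(I-\pi)\bh - \alpha_{\frac{1}{3}}(I-\pi)\bbf - \alpha_{\frac{2}{3}}(I-\pi)\bg\bigr\|^2
\end{equation*}
over the two scalars $\alpha_{\frac{1}{3}},\alpha_{\frac{2}{3}}$.

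Next I would write the normal equations for this reduced two-dimensional least-squares problem by setting partial derivatives with respect to $\alpha_{\frac{1}{3}}$ and $\alpha_{\frac{2}{3}}$ to zero. The coefficient matrix is the Gram matrix of $(I-\pi)\bbf$ and $(I-\pi)\bg$, which gives the left-hand side of \eqref{Gram system}. For the right-hand side, I would use self-adjointness and idempotency of $I-\pi$ to rewrite $((I-\pi)\bh,(I-\pi)\bbf) = (\bh,(I-\pi)\bbf)$ and similarly for $\bg$, matching the stated system. This step is routine bookkeeping but is the main place where care is needed.

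Finally I would address solvability. If $(I-\pi)\bbf$ and $(I-\pi)\bg$ are linearly independent, the Gram matrix is symmetric positive definite, so the system admits a unique solution. If they are linearly dependent, the Gram matrix is only positive semidefinite, but the underlying minimization problem still has a minimizer because we are projecting $\bh$ onto the closed (finite-dimensional) subspace $S + \operatorname{span}\{\bbf,\bg\}$; hence the right-hand side necessarily lies in the range of the Gram matrix and the system is compatible. The main (mild) obstacle is not any step in isolation but making sure the two reductions — eliminating $\{\alpha_j\}$ and then forming the two-by-two normal equations — are carried out in a way that clearly identifies the right-hand side in the form stated in \eqref{Gram system}.
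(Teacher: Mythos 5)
Your proposal is correct and follows essentially the same route as the paper: both reduce the $(m+2)$-parameter problem to a two-parameter one over the projected vectors $(I-\pi)\bbf$ and $(I-\pi)\bg$, with the paper doing this via an orthogonal Pythagoras decomposition of the full residual and you doing it via sequential minimization (first over the $\alpha_j$, then over the two scalars), which are two ways of saying the same thing. Your treatment of the right-hand side (using self-adjointness and idempotency of $I-\pi$) and of the solvability dichotomy matches the paper and is if anything slightly more explicit.
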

\begin{proof}
It is clear that the least-squares problem \eqref{abstract least-squares problem} reduces to finding 
the best approximation to $\bh$ from the space spanned by the two vectors
$(I-\pi)\bbf$ and $(I-\pi)\bg$. Indeed, we can rewrite \eqref{abstract least-squares problem} as 
\begin{equation*}
\|(I-\pi)\bh -\alpha_{\frac{1}{3}} (I-\pi)\bbf - \alpha_{\frac{2}{3}} (I-\pi)\bg - \sum\limits_j \alpha{'}_j\bp_j\|
\mapsto \min.
\end{equation*}
Since the last component $\sum\limits_j \alpha{'}_j\bp_j$ is orthogonal to $(I-\pi)\left (\bh - \alpha_{\frac{1}{3}} 
\bbf - \alpha_{\frac{2}{3}} \bg\right )$, the above minimum equals
\begin{equation*}
\begin{array}{c}
\min\limits_{\alpha_{\frac{1}{3}},\;\alpha_{\frac{2}{3}}}\;\min\limits_{\alpha^{'}_j}\;
\|(I-\pi)\bh -\alpha_{\frac{1}{3}} (I-\pi)\bbf - \alpha_{\frac{2}{3}} (I-\pi)\bg - \sum\limits_j \alpha{'}_j\bp_j\|\\
= \min\limits_{\alpha_{\frac{1}{3}},\;\alpha_{\frac{2}{3}}}\;\min\limits_{\alpha^{'}_j}\;
 \left (
\|(I-\pi)\bh -\alpha_{\frac{1}{3}} (I-\pi)\bbf - \alpha_{\frac{2}{3}} (I-\pi)\bg\|^2 
+ \|\sum\limits_j \alpha{'}_j\bp_j\|^2 \right )^{\frac{1}{2}}\\
= \min\limits_{\alpha_{\frac{1}{3}},\;\alpha_{\frac{2}{3}}}\;
\|(I-\pi)\bh -\alpha_{\frac{1}{3}} (I-\pi)\bbf - \alpha_{\frac{2}{3}} (I-\pi)\bg \|\\
= \min\limits_{\alpha_{\frac{1}{3}},\;\alpha_{\frac{2}{3}}}\;
\left (\|\bh -\alpha_{\frac{1}{3}} (I-\pi)\bbf - \alpha_{\frac{2}{3}} (I-\pi)\bg \|^2-\|\pi \bh\|^2 
\right )^{\frac{1}{2}}.
\end{array}
\end{equation*}
The last problem leads exactly to the Gram system \eqref{Gram system}.   We note that
\begin{equation*}
\min\limits_{\alpha_{\frac{1}{3}},\;\alpha_{\frac{2}{3}}}\;
\|\bh -\alpha_{\frac{1}{3}} (I-\pi)\bbf - \alpha_{\frac{2}{3}} (I-\pi)\bg \|
 = \|\bh - \bh^\perp\|, 
\end{equation*}
where $\bh^\perp$ is the unique best approximation to $\bh$ from the space  spanned by the vectors 
$(I-\pi)\bbf$ and $(I-\pi)\bg$. The latter space has  dimension zero, one, or two.
This completes the proof.
\end{proof}
We now apply last Lemma to our case, to show that the system \eqref{particular system for fractional alphas} has a solution and hence $\B^{-1}$ is well defined.
We set  $\bh = A^{-1}\br_m$, $\bbf = B^{-1}\br_m$, $\bg = Z^{-1} \br_m$
and $\{\bp_j\} =\{\bd_j\}^m_{j=0}$ 
 for the vector space with inner-product $(\cdot,\;\cdot)_* = (A(\cdot),\;A (\cdot))$. Using then that the 
$\{\bd_j\}$ are $(\cdot,\;\cdot)_*$-orthonormal, we conclude by applying Lemma \ref{lemma: for projections} that  the system 
\eqref{particular system for fractional alphas} is solvable.\\

Once the coefficients $\alpha_{m+\frac{s}{3}}$, $s=1,2$ are determined, we compute the new direction 
$\bd_{m+1}$ from
\begin{equation*}
\beta_{m+1}\bd_{m+1} = \alpha_{m+\frac{1}{3}} B^{-1} \br_m + \alpha_{m+\frac{2}{3}} Z^{-1} \br_m
-\sum\limits^m_{j=0}\beta_j \bd_j,
\end{equation*}
by choosing the coefficients $\{\beta_j\}^m_{j=0}$ to satisfy 
the required orthogonality conditions $$
(\bd_{m+1},\bd_j)_*=(A\bd_{m+1},A\bd_j)=0 \quad \mbox{  for   }\quad j<m+1,
$$
which gives
(assuming by induction that $(\bd_j,\;\bd_k)_*= \delta_{j,k}$)
\begin{equation*}
\beta_j = (\alpha_{m+\frac{1}{3}} B^{-1} \br_m + \alpha_{m+\frac{2}{3}} Z^{-1} \br_m,\; \bd_j)_*
\text{ for } j \le m.
\end{equation*}
The last coefficient, $\beta_{m+1}$, is computed so that $\|\bd_{m+1}\|_*= 1$.

\subsection{Convergence}
We now establish two results that provide estimates for the convergence of the variable preconditioned GMRES method. 
\begin{theorem}\label{theorem:2}
Let $(A,A_0)$ satisfy assumption {\bf (H0)} with constants $(c_0,c_1)$ and let  $B \in \mathbb{R}^{n\times n}$ be the additive Schwarz preconditioner for $A$, whose inverse is defined through \eqref{prec:B}. Let $Z:=BA^{-1}_0 B^T$ be a preconditioner for $A$.  Let $\B$ be the variable--step preconditioner with inverse defined through \eqref{b22} with coefficients determined inside the GMRES iteration by minimization of the residual. Then, the variable preconditioned GMRES method for $A$, at every step $k$, converges at least as fast as performing one iteration of the restarted (at step $k$) preconditioned GMRES method with preconditioner $Z$. 
\end{theorem}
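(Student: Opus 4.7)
The plan is to show that the residual of the combined method is bounded, iteration-by-iteration, by that of $Z$-preconditioned GMRES. Denote by $\by_m$ the iterates of $Z$-GMRES from the same initial guess $\bx_0$ used by the combined method, and by $\bx_m^{\B}$ those of the combined method, with residuals $\br_m^Z = \bb - A\by_m$ and $\br_m^{\B} = \bb - A\bx_m^{\B}$. The goal is to establish $\|\br_m^{\B}\| \le \|\br_m^Z\|$ for every $m \ge 0$, so that the estimate \eqref{estimate:0} of Theorem \ref{theorem: convergence for Z} becomes, in particular, an upper bound for the residual norm of the combined method.

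The argument is a ``larger search space'' optimality comparison. At each iteration $m+1$, the combined method's practical minimization, whose solvability is guaranteed by Lemma \ref{lemma: for projections}, varies over the two-parameter family $(\alpha_{m+\frac{1}{3}}, \alpha_{m+\frac{2}{3}})$ of coefficients along the two new directions $B^{-1}\br_m^{\B}$ and $Z^{-1}\br_m^{\B}$, in addition to coefficients of the previously computed $\{\bd_j\}_{j\le m}$. The first step is a separate induction on $m$ showing that if one forces $\alpha_{k+\frac{1}{3}} \equiv 0$ at every step, the combined method reduces identically to $Z$-preconditioned GMRES: the search direction $\bd_{k+1}$ then becomes proportional to the $Z$-preconditioned residual orthogonalized against the previous directions (which is precisely the $Z$-GMRES construction), and the resulting iterates coincide with $\{\by_m\}$.

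Since the unrestricted combined minimization includes this restriction as a particular admissible choice of coefficients, the unrestricted residual cannot exceed the restricted one, yielding $\|\br_{m+1}^{\B}\| \le \|\br_{m+1}^Z\|$. Combined with Theorem \ref{theorem: convergence for Z}, this establishes the advertised faster convergence of the variable preconditioned GMRES, with strict inequality whenever the optimal $\alpha_{k+\frac{1}{3}}$ is nonzero at some earlier step.

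The main obstacle I anticipate is that the unrestricted and restricted trajectories diverge from the first iteration at which the optimal $\alpha_{k+\frac{1}{3}}$ becomes nonzero; the search subspaces they accumulate thereafter differ in general, and the comparison is not a literal subspace-inclusion argument. Handling this requires careful inductive bookkeeping that compares the two minimization problems at each step in turn, and uses the projection formalism of Lemma \ref{lemma: for projections} to tie the one-parameter restricted minimization at step $m+1$ back, through the inductive hypothesis on the previous residuals, to the corresponding $Z$-GMRES update.
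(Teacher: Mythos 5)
The paper's proof takes a much simpler route than yours: it does not attempt to compare the actual residual sequences of the two methods. Instead it uses a one-step argument. Because at step $m+1$ the combined GMRES minimizes over the previously accumulated search directions plus the two new directions $B^{-1}\br_m$ and $Z^{-1}\br_m$, the minimal residual cannot exceed what one gets by restricting all old coefficients $\alpha_j$ to zero:
\begin{equation*}
\|\br_{m+1}\| \;\le\; \min_{\alpha,\sigma}\;\|\br_m - A(\alpha B^{-1}+\sigma Z^{-1})\br_m\| \;\le\; \min_\sigma \|\br_m - \sigma A Z^{-1}\br_m\|.
\end{equation*}
In the $\|\cdot\|_{Z^{-1}}$-norm, the right-hand side is exactly the one-step minimal-residual update for the $Z$-preconditioned system and is bounded by the factor $(1-\sqrt{\alpha_0/\alpha_1})^{1/2}$ times $\|\br_m\|_{Z^{-1}}$ coming out of the Eisenstat--Elman--Schultz theory used in Theorem~\ref{theorem: convergence for Z}. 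Iterating this inequality with the combined method's own residual history shows the combined method satisfies the same geometric bound \eqref{estimate:0}, which is what ``converges faster'' means here.

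Your approach instead tries to prove the pointwise comparison $\|\br_m^{\mathcal B}\| \le \|\br_m^{Z}\|$ by a search-space inclusion argument between the two full trajectories. This does not go through, and you correctly diagnose why: once the optimal $\alpha_{k+\frac{1}{3}}$ is nonzero at some step, the residuals of the two runs diverge, so at every subsequent step the combined method is minimizing over directions built from \emph{its own} residual, not from the $Z$-GMRES residual. The set $\{\bd_j^{\mathcal B}\}$ accumulated by the combined method need not contain (or even overlap usefully with) the $Z$-GMRES Krylov space, so there is no subspace-inclusion inequality to invoke at step $m+1$. Your proposed remedy --- ``careful inductive bookkeeping'' tying the restricted one-parameter minimization back to the $Z$-GMRES update through the inductive hypothesis --- is left unspecified, and this is precisely the part that cannot be filled in: a flexible-GMRES trajectory that has wandered off the $Z$-GMRES trajectory has no reason to dominate it residual-by-residual in general. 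The gap is real, and you should instead adopt the paper's one-step comparison, which sidesteps the trajectory-comparison problem entirely and is all the theorem (a bound, not a pointwise comparison) actually requires.
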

\begin{proof}
The proof of the Theorem follows by the definition of $\B^{-1}$ (as explained before). From its construction it is straightforward to infer the following comparative convergence estimate
\begin{equation*}
\|\br_{m+1}\| \le \min_{\alpha, \;\sigma}\;\|\br_m - A(\alpha B^{-1} + \sigma Z^{-1}) \br_m\|
\le \min_{\sigma}\; \|\br_m - \sigma A Z^{-1} \br_m\|.
\end{equation*}
Moreover, if we choose  $\|\cdot\|$ to be the norm $\|\bv\|_{Z^{-1}} = \sqrt{\bv^T Z^{-1} \bv}$, we can apply the result from Theorem  \ref{theorem: convergence for Z}. We then have
\begin{equation*}
\begin{array}{rl}
\|\br_{m+1}\|_{Z^{-1}} \displaystyle &\le \min_{\sigma}\; \|\br_m - \sigma A Z^{-1} \br_m\|_{Z^{-1}}\\
\displaystyle & \le \left [1-\left (\frac{\alpha_0}{\alpha_1} \right )^2 \right ]^{\frac{1}{2}} \|\br_m\|_{Z^{-1}} \le \dots \le
\left [1-\left (\frac{\alpha_0}{\alpha_1} \right )^2 \right ]^{\frac{m+1}{2}} \|\br_0\|_{Z^{-1}}.
\end{array}
\end{equation*}
That is, estimate \eqref{estimate:0}  is an upper bound for the  convergence estimate of  the combined preconditioned GMRES method. Therefore, we showed that the rate of convergence of the combined preconditioned GMRES can be bounded at least with the same bound as of  the GMRES method with preconditioner $Z$ only. \end{proof}

We wish to stress that Theorem \ref{theorem:2} ensures that at {\it every iteration} the combined preconditioner gives better reduction of the residual than applying one iteration of the restarted GMRES using only one of the preconditioners, $Z$ (or $B$), and dropping the previous search directions.
This is sufficient to show convergence of the combined preconditioned GMRES algorithm since in our case with $Z$ being s.p.d., we can prove that it is convergent (cf. Theorem \ref{theorem: convergence for Z}).\\

 We close the section by showing (arguing as before) convergence for another combined preconditioner, which performance is also 
demonstrated in the numerical experiments section~\ref{sec:5}. 
\begin{theorem}\label{theorem:4}
Let $(A,A_0)$ satisfy assumption {\bf (H0)} with constants $(c_0,c_1)$ and let  $B \in \mathbb{R}^{n\times n}$ be the additive Schwarz preconditioner for $A$, whose inverse is defined through \eqref{prec:B}. Let $\B$ be the variable--step preconditioner with inverse defined through:
\begin{equation}\label{bbt}
\B^{-1}= \alpha B^{-1} +\sigma B^{-T}
\end{equation}
 with coefficients determined inside the GMRES iteration by minimization of the residual. Then, the variable preconditioned GMRES method for $A$, at every step $k$, converges at least as fast as performing one iteration of the restarted (at step $k$) preconditioned GMRES method with preconditioner $W$ defined in \eqref{prec:W}.
\end{theorem}

\begin{proof}
We start recalling that the GMRES method with the s.p.d. preconditioner $W$ defined in \eqref{prec:W} is convergent for $A$. Then, arguing  as in the proof of Theorem \ref{theorem:2},  we have
\begin{equation*}
\begin{array}{rl}
\|\br_{m+1}\| &\le \min_{\alpha, \;\sigma}\;\|\br_m - A(\alpha B^{-1} + \sigma B^{-T}) \br_m\|\\
& \le \min_{\sigma}\; \|\br_m - \sigma A (\frac{1}{2}\left (B^{-1}+B^{-T} \right )) \br_m\| \\
& = \min_{\sigma}\; \|\br_m - \sigma A W^{-1} \br_m\|.
\end{array}
\end{equation*}
Since for the latter one we have convergence the proof is complete.
\end{proof}

Notice that, arguing as in the proof of Theorem \ref{theorem:2} , one could straightforwardly establish the bounds:
\begin{equation*}
\begin{aligned}
\|\br_{m+1}\| &\le \min_{\alpha, \;\sigma}\;\|\br_m - A(\alpha B^{-1} + \sigma B^{-T}) \br_m\|
 \le  \min_{\sigma}\; \|\br_m - \sigma A B^{-T} \br_m\|. &&\\
 \|\br_{m+1}\| &\le \min_{\alpha, \;\sigma}\;\|\br_m - A(\alpha B^{-1} + \sigma B^{-T}) \br_m\|
 \le  \min_{\alpha}\; \|\br_m - \alpha A B^{-1} \br_m\|. &&
\end{aligned}
\end{equation*}
However, they might or might not be overestimates. From them, it is not possible to deduce a rigorous convergence comparison.

\subsection{ A Modified GMRES method with combined preconditioning}\label{sec:4} 

Here, we present a modified GMRES method which can be guaranteed to converge better than standard GMRES using preconditioner $Z$ but which is much more expensive than the previous method described  so far.\\

The possible modification of the algorithm we present now,  allows for comparing the convergence of  the GMRES method using the combined preconditioner and  using only one of the preconditioners. \\

The idea is to incorporate in \eqref{eq:des:bes} the set of search directions $\{\bd^Z_j\}^m_{j=0}$  generated by running separately  the GMRES method with the $Z$ preconditioner. Obviously we are increasing the dimension of the Krylov subspaces over which the minimization  of the residual norm is done.
Notice that the variable--step preconditioner \eqref{b22} corresponds to the case $m_0=0$ in the present algorithm.

We start constructing two sets of search directions, the set $\{\bd_j\}^m_{j=1}$ and another set $\{\bd^Z_j\}^m_{j=0}$ that corresponds to running GMRES using $Z$ 
as a preconditioner which also produces its residual $\br^Z_m$.
Based on $\br^Z_m$ and $\{\bd^Z_j\}^m_{j=0}$, we compute $\bd^Z_{m+1}$ which is a linear combination of
$Z^{-1}\br^Z_m$ and the set $\{\bd^Z_j\}^m_{j=1}$. This process is in principle independent of the combined preconditioned GMRES and it can be run separately.

 At step $m+1$ of the combined method, we construct a new search direction $\bd_{m+1}$ as follows
\begin{equation}\label{eq:des:bes2}
\beta_{m+1}\bd_{m+1} = \beta_{m+\frac{1}{3}}B^{-1}\br_m + \beta_{m+\frac{2}{3}} Z^{-1} \br_m + \sum\limits^m_{j=0} \beta_j \bd_j
+\sum\limits^m_{j=0} \beta^Z_j \bd^Z_j.
\end{equation}
The difference from the previous version (see \eqref{eq:des:bes}) is that we also use the search directions coming from the 
GMRES process corresponding to the preconditioner $Z$.

The coefficients $\beta_j$ and $\beta^Z_j$ are computed from the orthogonality conditions:
\begin{equation}\label{new search dieraction}
(\bd_{m+1}, \bd_j)_* = 0,\quad \mbox{and } \quad  (\bd_{m+1},\;\bd^Z_j)_*= 0, \quad \mbox{  for  }\quad \; j=0,\;\dots,\;m.
\end{equation}
Note that by construction, we are ensuring that
\begin{equation}\label{additional orthogonality conditions}
(\bd_j,\;\bd^Z_k)_* =0 \text{ for } j>k \ge 0.
\end{equation}
Now, the coefficients $\beta_{\frac{1}{3}}$ and $\beta_{\frac{2}{3}}$  in \eqref{eq:des:bes2} can be computed as in Lemma \ref{lemma: for projections} where now $\pi$ is the $(.,.)_*$-orthogonal projection on the space spanned by the two sets of search directions, $\{\bd_j\}^m_{j=1}$ and  $\{\bd^Z_j\}^m_{j=0}$, and setting ${\bf h}=\br_m$. Then, the system 
for 
the coefficients $\beta_j$ and $\beta^Z_j$, $j \le m$, \eqref{new search dieraction} is solvable and it simplifies somewhat using the orthogonality of the two search directions given in  \eqref{new search dieraction} and  \eqref{additional orthogonality conditions}. 
The next iterate of the method takes the form
\begin{equation*}
\begin{array}{rl}
\bx_{m+1} &= \bx_m + \sum\limits^{m+1}_{j=0}
\left ({\overline \alpha}_j \bd_j + {\overline \alpha}^Z_j \bd^Z_j \right )\\
&= \bx_0 +\sum\limits^{m+1}_{j=0}
\left (\beta_j \bd_j + \beta^Z_j \bd^Z_j \right ).
\end{array}
\end{equation*}
The latter equality holds, since by construction, $\bx_m-\bx_0$ is spanned by the previous search directions 
$\{\bd_j\}^m_{j=0}$ and $\{\bd^Z_j\}^m_{j=0}$. 
The coefficients $\{\beta_j\}^m_{j=0}$ and
$\{\beta^Z_j\}^m_{j=0}$ are computed by solving the minimization problem
\begin{equation*}
\begin{array}{rl}
\|\br_{m+1}\| &= \min\limits_{\{{\overline \alpha}_j\}^m_{j=0},\;\{{\overline \alpha}^Z_j\}^{m+1}_{j=0}}\; \|\br_m -
A\left (\sum\limits^{m+1}_{j=0}{\overline \alpha}_j \bd_j + \sum\limits^{m+1}_{j=0} {\overline \alpha}^Z_j \bd^Z_j\right )\|\\
& = \min\limits_{\{\beta_j\}^m_{j=0},\;\{\beta^Z_j\}^{m+1}_{j=0}}\; \|\br_0 -
A\left (\sum\limits^{m+1}_{j=0}\beta_j \bd_j + \sum\limits^{m+1}_{j=0} \beta^Z_j \bd^Z_j\right )\|.
\end{array}
\end{equation*}
This immediately shows that
\begin{equation*}
\begin{array}{rl}
\|\br_{m+1}\| &= \min\limits_{\{\beta_j,\;\beta^Z_j\}^{m+1}_{j=0}}\; \|\br_0 -
A\left (\sum\limits^{m+1}_{j=0}\beta_j \bd_j + \sum\limits^{m+1}_{j=0} \beta^Z_j \bd^Z_j\right )\| \\
& \le \min\limits_{\{\beta^Z_j\}^{m+1}_{j=0},}\; \|\br_0 -A(\sum\limits^{m+1}_{j=0} \beta^Z_j \bd^Z_j)\| \\
& = \|\br^Z_{m+1}\|.
\end{array}
\end{equation*}

At this point, we wish to note  that the full version of the modified method might not be very practical for large $m$ since we need $2m$ search directions and require $2m$ applications of the preconditioner $Z$ (in addition to $m$ actions of the preconditioner $B$). 
A more practical version of the method might be to use restart for computing the $Z$-based search directions (or both sets of search directions) and use the above modified method for $m=m_0 \ge 0$ steps. We have focused in the present paper on the (extreme) case $m_0 =0$, corresponding to the variable--step preconditioner \eqref{b22}.
The more general case $m_0 >0$  however requires more detailed investigation and is left for a future study.

\section{Applications and numerical results}\label{sec:5}

In this section we present an application of the results  presented in the previous sections, that will allow us to verify the developed theory and assess the performance of the different preconditioners.\\

The application we consider comes from a nonsymmetric Discontinuous Galerkin discretization of an elliptic problem, which was the starting motivation of this work. In \cite{paola-blanca1,paola-blanca2}, additive and multiplicative Schwarz preconditioners were developed for the solution of the above algebraic system. In both works, the authors show that the GMRES convergence theory cannot be applied for explaining the observed convergence since the preconditioned system does not satisfy the {\it sufficient} conditions for such theory. Here we aim at comparing the performance of the different preconditioners introduced in the previous sections, for such discretizations.\\

More precisely, we consider the following model problem on the unit square with domain $\Omega = [0,1]^2 $:
\begin{equation*}
 -\Delta u^{\ast} = f \quad \mbox{ in   } \Omega, \qquad u=0 \quad \mbox{on   } \partial\Omega\;,
 \end{equation*}
where the right hand side  $ f $ is chosen so the exact solution is $ u^{\ast} = \sin(\pi x) \sin(\pi y)$, and  we focus  on the  Incomplete Interior Penalty Discontinuous Galerkin (IIPG)  \cite{IIPG04} discretization of the above model problem, with linear discontinuous finite element space  (denoted by $V^{DG}$) on a shape regular triangulation of $ \Omega$, denoted by $\mathcal{T}_h$. The resulting method reads: {\it Find  $u \in V^{DG}$} such that
\begin{equation*}
a_h(u,v)=\int_{\Omega} fv \, dx \quad \forall \, \, v\in V^{DG}\;.
\end{equation*}
The bilinear form of the IIPG method is given by
\begin{equation}\label{iipbilinear}
  a_h(u,v) = \sum_{K\in \mathcal{T}_h} \int_K \nabla u \cdot \nabla v \, dx - \sum_{e\in \mathcal{E}_h} \int_e \avg{\nabla u } \cdot \jump{v} \, ds  + \sum_{e\in \mathcal{E}_h} \frac{\eta}{|e|} \int_e \jump{u} \cdot \jump{v} \, ds,  
\end{equation}
for all $ u,v \in V^{DG} $.  Here, $K\in \mathcal{T}_h$ refer to an element of the triangulation, $e\subset \partial K$ denotes an edge of the element and we have denoted by $\mathcal{E}_h$ the set of all such edges or skeleton of the partition $\mathcal{T}_h$. We have used the standard definition of the average $\{\{.\}\}$
and jump $\left[\hspace{-0.025in}\left[.\right] 
\hspace{-0.025in}\right]$ operators from \cite{abcm}, and the penalty parameter $ \eta $ is set to $5$ in all the experiments.  We denote by $ A_h $ the matrix representation of the operator associated to the bilinear form \eqref{iipbilinear}, with standard Lagrange basis functions.  As noted in the fist section, with a small abuse of notation, we use the same notation $ A, B^{-1}, Z^{-1} $ and so on for operators and for their representations as matrices.  
Here, ${\bf u}$ and ${\bf f}$ denote the vector representations (in the same basis) of the solution (that we aim to compute) and the right hand side. In the end, the solution process amounts to solve the nonsymmetric system:
\begin{equation}\label{sys:002}
  A_h {\bf u }= {\bf  f}.
\end{equation}

The preconditioners we  use are based on the standard two--level overlapping domain decomposition additive Schwarz preconditioner which we  denote by $ B^{-1} $.  To define it, we consider an overlapping partition of $ \Omega $ into rectangular subdomains $ \Omega_k $ which overlap each other by an amount equal to the fine discretization size $ h $.  Then
\begin{equation}
  B^{-1} = I_H A_H^{-1} I_H^T + \sum_{k=1}^{N_s} I_k A_k^{-1} I_k^T.
  \label{standardasm}
\end{equation}
Here the $ A_k $ operators are restrictions of the original operator $ A_h $ to the finite element space $ V_k $ that is only supported on $ \Omega_k $, that is they correspond to the bilinear forms,
\[
  a_k(u,v) = a_h(u,v), \quad \forall u,v \in V_k,
\]
as in \cite{kara0, barkerldg}.  Since $ V_k \subset V_h $, the operators $ I_k $ are standard injection.  The operator $ A_H $ corresponds to the bilinear form \eqref{iipbilinear} on a coarser discretization of the original domain $ \Omega $, where we label the coarse discretization size $ H $.  We assume that the fine mesh is a refinement of the coarse mesh used to represent $ A_H $ so that $ I_H $ is the natural injection on nested grids.  The penalty parameter on the coarse solver is taken to be $ 5 H/h $ in order to account for the difference of scales in the edge lengths in the penalty terms (see \cite{paola-blanca1, kara0}, for further details).  We implement these preconditioners on a parallel machine where each subdomain of the Schwarz preconditioner corresponds to one processor, where this domain decomposition is in principle independent of the coarse space $V_H$.  The subdomains are square or rectangular, so that on four processors the decomposition is 2 by 2 and on eight processors it is 4 by 2, and so on for larger processor counts.  The tests were run on a commodity Linux cluster using PETSc as the software framework and direct solves for the subdomain problems.

Another preconditioner we consider is
\begin{equation}
  Z^{-1} = B^{-T} A_0 B^{-1}, 
  \label{zdefinition}
\end{equation}
as outlined in the analysis above.  Here $ A_0 $ is a symmetric operator corresponding to the bilinear form
\begin{equation}
  a_0(u,v) = \sum_K \int_K \nabla u \cdot \nabla v \, dx + \sum_e \frac{\eta_0}{|e|} \int_e \jump{u} \cdot \jump{v} \, ds,
  \label{a0bilinearform}
\end{equation}
where the penalty parameter $ \eta_0 = 5 $ is the same as it is in the full bilinear form \eqref{iipbilinear}. \\

In what follows we consider four different preconditioning techniques for the nonsymmetric system \eqref{sys:002},  namely
\begin{enumerate}
  \item the standard additive Schwarz preconditioner $ B^{-1} $ \eqref{standardasm} used in a right--preconditioned GMRES algorithm, for comparison with the other options,
  \item the preconditioner $ Z^{-1} = B^{-T} A_0 B^{-1} $ from \eqref{prec:Z} again used in a right--preconditioned GMRES,
  \item the variable--step preconditioner GMRES variant \eqref{b22} that uses combinations of  $ B^{-1} $ and $ Z^{-1} $,
  \item the variable--step preconditioner GMRES variant \eqref{bbt} that uses combinations of  $ B^{-1} $ and $B^{-T}$.
\end{enumerate}

We note that in the third case, if we have applied $ B^{-1} $ to a vector $ {\bf u} $ we can construct $ Z^{-1} {\bf u }$ by applying $ B^{-T} A_0 $ to save ourselves one preconditioner application.  Our comparisons are done with right--preconditioned GMRES because the procedure in Section \ref{sec:4} minimizes the true residual, as in right--preconditioned GMRES, rather than the preconditioned residual.\\

\begin{table}
  \caption{Iterations to convergence for $ h = 2^{-7}, H = 2^{-5} $ with a nearly exact coarse solver and no restart.}
  \label{iterations}
  \begin{center}\begin{tabular}{l|llll}
    $N_s$& $B^{-1}$ and $Z^{-1}$& $B^{-1}$ and $B^{-T}$& $Z^{-1}$& $B^{-1}$ \\ 
    \hline
    4& 15& 16& 38& 16 \\ 
    8& 15& 18& 42& 18 \\ 
    16& 16& 18& 42& 18 \\ 
    32& 16& 18& 44& 18 \\ 
    64& 15& 17& 43& 17 \\ 
    128& 16& 18& 46& 18 \\ 
  \end{tabular}\end{center}
\end{table}

\begin{figure}
  \begin{center}
    \includegraphics[width=0.5\textwidth]{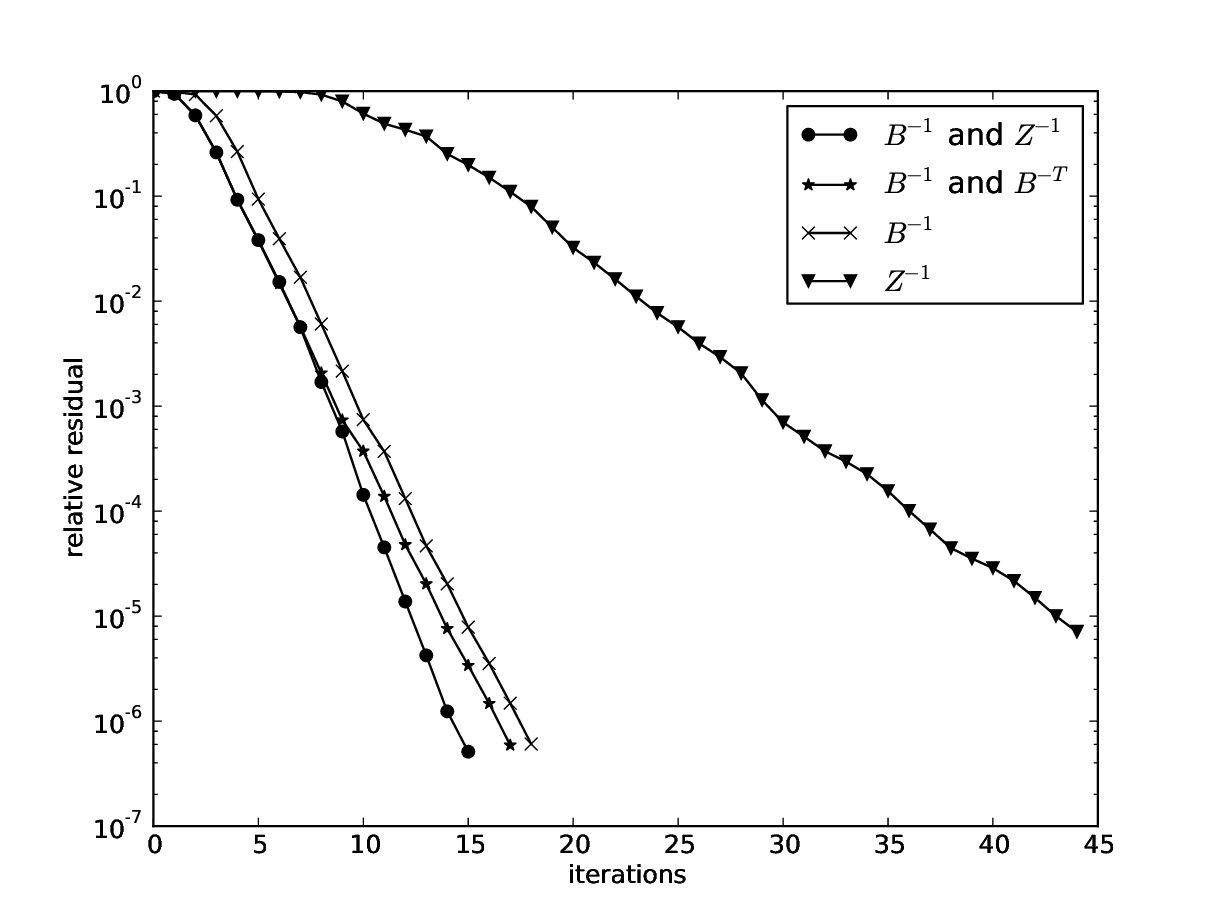}
  \end{center}
  \caption{Convergence of the relative residual ($ \|r_k\| / \| r_0 \| $) on a log scale versus iteration number $ k $ for the different preconditioning strategies, corresponding to the tests in Table \ref{iterations} with $ N_s = 32 $.}
  \label{residualexact}
\end{figure}

\begin{table}
  \caption{Convergence factor for $ h = 2^{-7}, H = 2^{-5} $ with a nearly exact coarse solver and no restart.}
  \label{rate}
  \begin{center}\begin{tabular}{l|llll}
    $N_s$& $B^{-1}$ and $Z^{-1}$& $B^{-1}$ and $B^{-T}$& $Z^{-1}$& $B^{-1}$ \\ 
    \hline
    4& 0.38& 0.41& 0.73& 0.41 \\ 
    8& 0.40& 0.45& 0.76& 0.45 \\ 
    16& 0.40& 0.44& 0.75& 0.44 \\ 
    32& 0.40& 0.45& 0.76& 0.45 \\ 
    64& 0.39& 0.44& 0.76& 0.44 \\ 
    128& 0.39& 0.45& 0.77& 0.45 \\ 
  \end{tabular}\end{center}
\end{table}

The number of GMRES iterations necessary to reduce the relative residual by $ 10^{-6} $ for our four different preconditioning approaches using various numbers of subdomains $ N_s $ for a fixed problem is given in Table \ref{iterations}.  Here we solve the coarse problem involving $ A_H^{-1} $ to a tolerance of $ 10^{-10} $ so that this solve is nearly exact in order to satisfy the theory more closely.  The preconditioning techniques are seen to be scalable in the sense that the number of iterations does not increase as $ N_s $ increases for all four methods (see \cite[Def. 1.3]{toselli-libro}).  The corresponding convergence curves for the case $ N_s = 32 $ are shown in Figure \ref{residualexact}.  We show the convergence factors for this test in Table \ref{rate}, where the convergence factor is defined as 
\[
  \rho = \left( \frac{ \| r_k \| }{ \| r_0 \| } \right)^{1/k},
\]
for the true residuals $ r_k $ where $ k $ is the number of iterations.  

To get a rough idea of computational cost, we show the time to solution in seconds for the four approaches in Table \ref{time}.  We conclude that the $ Z^{-1} $ preconditioner is not competitive because it is the most expensive in terms of time per iteration and it also requires the most iterations.  For this reason we do not consider it further in these numerical results.  The two preconditioning techniques that use the variable--step preconditioner GMRES variant are seen to be effective in convergence rate but to be somewhat more expensive than the classical $ B^{-1} $ preconditioner, as we might expect.  The timings presented in this table are intended to compare the computational costs of the preconditioning strategies and not to demonstrate high performance on parallel machines, which is not evident unless we consider a much larger problem. \\

\begin{table}
  \caption{Time to solution for $ h = 2^{-7}, H = 2^{-5} $ with a nearly exact coarse solver and no restart.}
  \label{time}
  \begin{center}\begin{tabular}{l|llll}
    $N_s$& $B^{-1}$ and $Z^{-1}$& $B^{-1}$ and $B^{-T}$& $Z^{-1}$& $B^{-1}$ \\ 
    \hline
    4& 1.74& 1.79& 3.12& 0.84 \\ 
    8& 1.39& 1.57& 3.22& 0.74 \\ 
    16& 0.70& 0.75& 1.58& 0.34 \\ 
    32& 1.66& 1.83& 4.63& 0.94 \\ 
    64& 1.44& 1.66& 4.65& 0.88 \\ 
    128& 2.70& 2.98& 8.59& 1.76 \\ 
  \end{tabular}\end{center}
\end{table}

\begin{table}
  \caption{Iterations to convergence for $ h = 2^{-7}, H = 2^{-5} $ with an inexact coarse solver and restarting every 10 iterations.}
  \label{practicaliterations}
  \begin{center}\begin{tabular}{l|lll}
    $N_s$& $B^{-1}$ and $Z^{-1}$& $B^{-1}$ and $B^{-T}$& $B^{-1}$ \\ 
    \hline
    4& 15& 18& 21 \\ 
    8& 16& 18& 20 \\ 
    16& 16& 18& 20 \\ 
    32& 16& 18& 19 \\ 
    64& 16& 18& 19 \\ 
    128& 16& 18& 19 \\ 
  \end{tabular}\end{center}
\end{table}

\begin{figure}
  \begin{center}
    \includegraphics[width=0.5\textwidth]{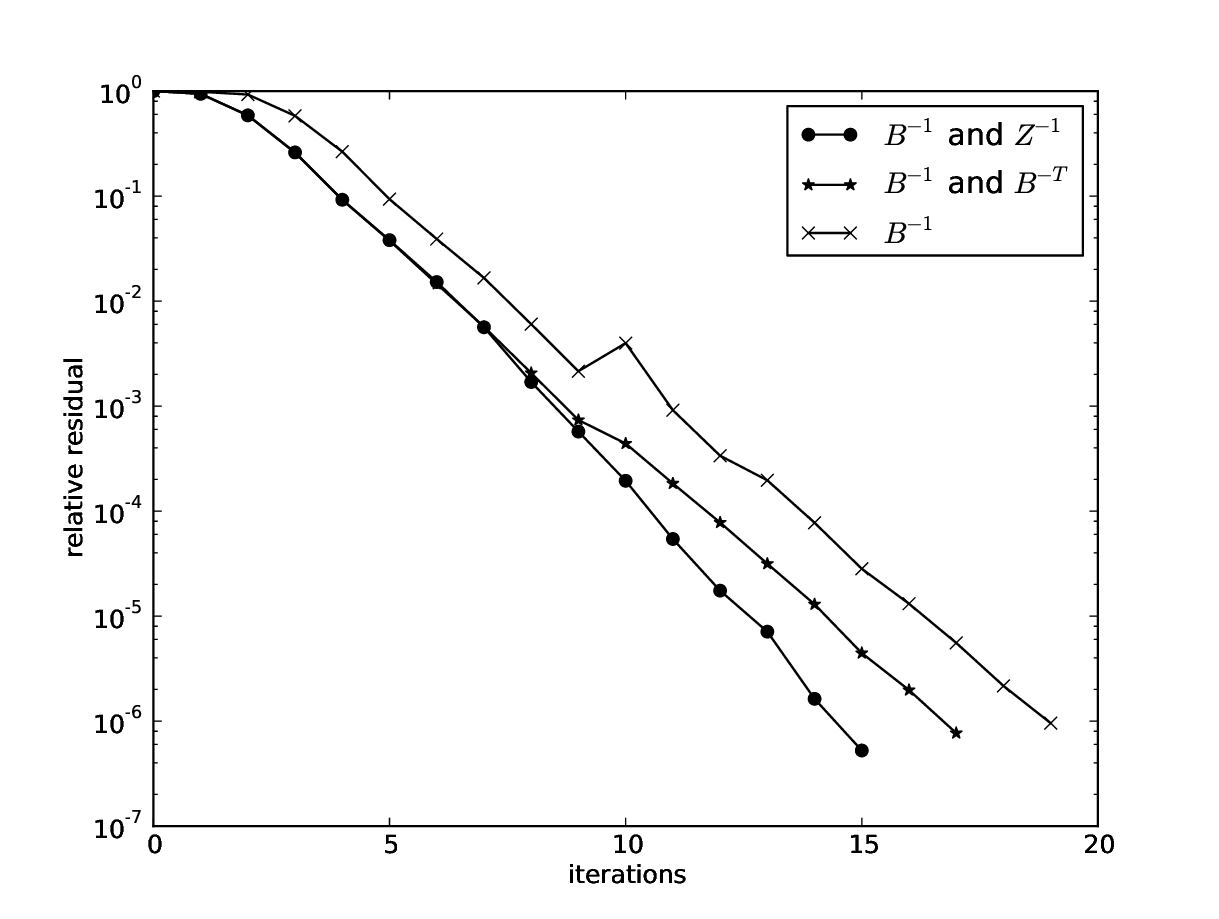}
  \end{center}
  \caption{Convergence of the relative residual ($ \|r_k\| / \| r_0 \| $) on a log scale versus iteration number $ k $ for the different preconditioning strategies, corresponding to the tests in Table \ref{practicaliterations} with $ N_s = 32 $.}
  \label{residualinexact}
\end{figure}

In practice for parallel computing applications the coarse solve in \eqref{standardasm} would not be done exactly.  Another modification that is often made in practice is to restart GMRES after several iterations.  In Tables \ref{practicaliterations}, \ref{practicalrate}, and \ref{practicaltime} we repeat the previous experiment where the relative residual tolerance for the coarse solves is set to $ 10^{-4} $ and GMRES is restarted every 10 iterations.  (These tables should be compared to Tables \ref{iterations}, \ref{rate}, and \ref{time} respectively.)  We see that the convergence behavior is quite similar and the computational cost is lower, suggesting that these common modifications are also useful and effective for our preconditioning strategies.  Corresponding convergence curves are shown in Figure \ref{residualinexact}.\\

\begin{table}
  \caption{Convergence factor for $ h = 2^{-7}, H = 2^{-5} $ with an inexact coarse solver and restarting every 10 iterations.}
  \label{practicalrate}
  \begin{center}\begin{tabular}{l|lll}
    $N_s$& $B^{-1}$ and $Z^{-1}$& $B^{-1}$ and $B^{-T}$& $B^{-1}$ \\ 
    \hline
    4& 0.39& 0.46& 0.51 \\ 
    8& 0.41& 0.46& 0.49 \\ 
    16& 0.41& 0.46& 0.48 \\ 
    32& 0.41& 0.46& 0.48 \\ 
    64& 0.39& 0.45& 0.46 \\ 
    128& 0.40& 0.46& 0.48 \\ 
  \end{tabular}\end{center}
\end{table}

\begin{table}
  \caption{Time to solution for $ h = 2^{-7}, H = 2^{-5} $ with an inexact coarse solver and restarting every 10 iterations.}
  \label{practicaltime}
  \begin{center}\begin{tabular}{l|lll}
    $N_s$& $B^{-1}$ and $Z^{-1}$& $B^{-1}$ and $B^{-T}$& $B^{-1}$ \\ 
    \hline
    4& 1.59& 1.73& 0.99 \\ 
    8& 1.29& 1.35& 0.72 \\ 
    16& 0.59& 0.62& 0.35 \\ 
    32& 0.73& 0.79& 0.46 \\ 
    64& 0.67& 0.71& 0.44 \\ 
    128& 0.89& 0.95& 0.60 \\ 
  \end{tabular}\end{center}
\end{table}

To see how these methods scale to larger problems, we consider a much finer mesh in Tables \ref{fineriterations}, \ref{finerrate}, and \ref{finertime}, while keeping the mesh size for the coarse solve quite coarse.  The scalability of the preconditioners in terms of iterations is still present, the preconditioner still performs well, and in these cases we can see fairly good parallel scalability in the sense that for a fixed problem size, doubling the number of processors in the parallel solve roughly cuts the execution time in half for all our preconditioning strategies.  Again convergence curves for this case are shown in Figure \ref{residualsmallh}.\\

\begin{table}
  \caption{Iterations to convergence for $ h = 2^{-10}, H = 2^{-6} $ with an inexact coarse solver and restarting every 10 iterations.}
  \label{fineriterations}
  \begin{center}\begin{tabular}{l|lll}
    $N_s$& $B^{-1}$ and $Z^{-1}$ & $B^{-1}$ and $B^{-T}$ & $B^{-1}$ \\ 
    \hline
    32& 30& 32& 31 \\ 
    64& 30& 31& 32 \\ 
    128& 30& 31& 31 \\ 
    256& 30& 31& 30 \\ 
  \end{tabular}\end{center}
\end{table}

\begin{figure}
  \begin{center}
    \includegraphics[width=0.5\textwidth]{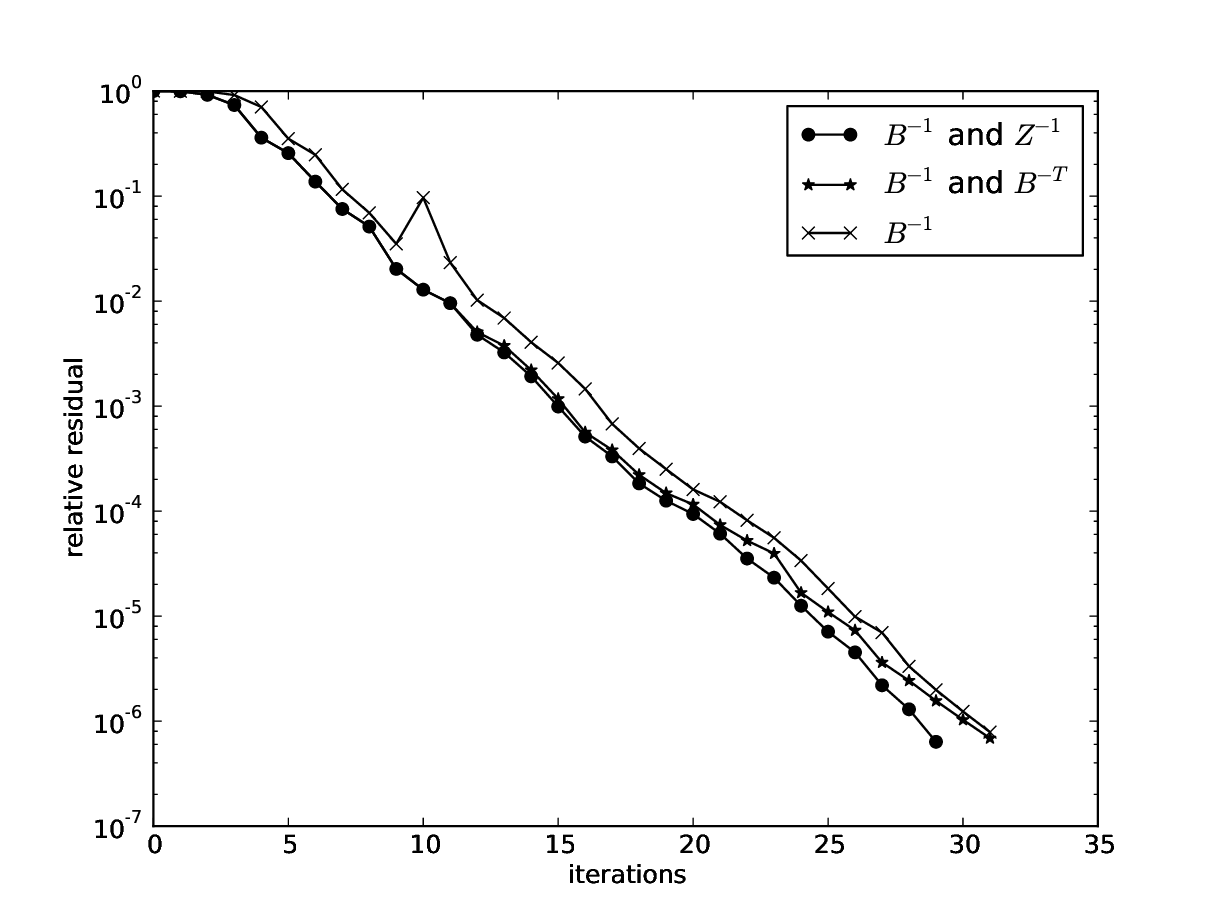}
  \end{center}
  \caption{Convergence of the relative residual ($ \|r_k\| / \| r_0 \| $) on a log scale versus iteration number $ k $ for the different preconditioning strategies, corresponding to the tests in Table \ref{fineriterations} with $ N_s = 128 $.}
  \label{residualsmallh}
\end{figure}

\begin{table}
  \caption{Convergence factor for $ h = 2^{-10}, H = 2^{-6} $ with an inexact coarse solver and restarting every 10 iterations.}
  \label{finerrate}
  \begin{center}\begin{tabular}{l|lll}
    $N_s$& $B^{-1}$ and $Z^{-1}$& $B^{-1}$ and $B^{-T}$& $B^{-1}$ \\ 
    \hline
    32& 0.62& 0.64& 0.64 \\ 
    64& 0.63& 0.64& 0.64 \\ 
    128& 0.62& 0.63& 0.63 \\ 
    256& 0.63& 0.63& 0.63 \\ 
  \end{tabular}\end{center}
\end{table}

\begin{table}
  \caption{Time to solution for $ h = 2^{-10}, H = 2^{-6} $ with an inexact coarse solver and restarting every 10 iterations.}
  \label{finertime}
  \begin{center}\begin{tabular}{l|lll}
    $N_s$& $B^{-1}$ and $Z^{-1}$ & $B^{-1}$ and $B^{-T}$ & $B^{-1}$ \\ 
    \hline
    32& 94.32& 95.55& 40.96 \\ 
    64& 37.89& 37.59& 16.66 \\ 
    128& 15.55& 15.60& 7.82 \\ 
    256& 7.76& 8.81& 5.50 \\ 
  \end{tabular}\end{center}
\end{table}

In all the results we have presented so far, the variable--step preconditioner GMRES variant has performed slightly better than the classical $ B^{-1} $ preconditioner in terms of number of iterations to convergence, but the increased computational cost per iteration of the combined preconditioned GMRES method has ended up making the classical preconditioner perform better in execution time.  This suggests that the new method may be competitive in settings where each iteration is very expensive, so that the savings in iteration count can make up for the increased cost per iteration.  To examine this setting we consider a problem in Tables \ref{competitiveiterations} and \ref{competitivetime} where the coarse grid solve is done on a relatively fine grid and is therefore quite expensive.  The results in this somewhat artificial setting do show that the new methods are competitive with the classical preconditioning techniques in terms of computational cost.\\

\begin{table}
  \caption{Iterations to convergence for $ h = 2^{-10}, H = 2^{-9} $ with an inexact coarse solver and restarting every 10 iterations.}
  \label{competitiveiterations}
  \begin{center}\begin{tabular}{l|lll}
    $N_s$& $B^{-1}$ and $Z^{-1}$ & $B^{-1}$ and $B^{-T}$ & $B^{-1}$ \\ 
    \hline
    32& 19& 20& 23 \\ 
    64& 17& 18& 22 \\ 
    128& 15& 17& 21 \\ 
    256& 15& 16& 20 \\ 
  \end{tabular}\end{center}
\end{table}

\begin{table}
  \caption{Time to solution for $ h = 2^{-10}, H = 2^{-9} $ with an inexact coarse solver and restarting every 10 iterations.}
  \label{competitivetime}
  \begin{center}\begin{tabular}{l|lll}
    $N_s$& $B^{-1}$ and $Z^{-1}$ & $B^{-1}$ and $B^{-T}$ & $B^{-1}$ \\ 
    \hline
    32& 277.50& 280.69& 352.85 \\ 
    64& 170.94& 187.29& 133.73 \\ 
    128& 79.01& 93.61& 99.50 \\ 
    256& 40.92& 45.41& 39.59 \\ 
  \end{tabular}\end{center}
\end{table}

As a final test, we consider the variable--step preconditioner  GMRES method using \eqref{b22} for solving a nonsymmetric convection--diffusion problem. The model problem is  set on $ \Omega=[0,1]^2 $ and given by
\[
  -\epsilon \Delta u^\ast + \mathbf b \cdot \nabla u^\ast = f \quad \mbox{ in   } \Omega, \qquad u=0 \quad \mbox{on   } \partial\Omega\;.
\] 
In our tests we take $ \mathbf b = (2,1)^T $ and $ \epsilon = 0.01 $. The data $f$ is chosen so that the exact solution is 
\[
  u^\ast = \left[ x + \frac{e^{b_1 x/\epsilon} - 1}{1 - e^{b_1/\epsilon}} \right] \cdot \left[ y + \frac{e^{b_2 y/\epsilon} - 1}{1 - e^{b_2/\epsilon}} \right]
\]
which has a boundary layer at the top and right sides of the domain. For this problem we use the symmetric interior penalty method (SIPG) \cite{arnold82,abcm} for discretizing the elliptic operator and standard upwind for the convection term. The nonsymmetry here comes from the discretization of the convective term.

The results we present are for the combined GMRES with the two preconditioners $B^{-1}$ and $Z^{-1}$, but for $Z^{-1}$ we need to define a symmetric positive definite auxiliary matrix $A_0$.  To try to include some of the convective terms we define $A_0$ to correspond to the bilinear form
\[
  a_0^c(u,v) = \epsilon a_0(u,v) + \sum_e \int_e | \mathbf b \cdot \mathbf n | \jump{u} \cdot \jump{v} \, ds,
\]
where $ a_0(\cdot,\cdot) $ is the symmetrized form previously defined in \eqref{a0bilinearform}.

Results for this example are shown in Tables \ref{convectioniterations} and \ref{convectionrate}, where we see that the convergence of the variable--step preconditioner performs better than the usual Schwarz preconditioning. We do not present timing for this example because we used a different computer than for the previous results, but qualitatively the timing comparison is similar to that in Table \ref{time}. 

\begin{table}
  \caption{For the convection--diffusion problem, number of iterations to convergence with $ h = 2^{-7}, H = 2^{-5}, \epsilon=0.01 $, using a nearly exact coarse solver and no restart.}
  \label{convectioniterations}
  \begin{center}\begin{tabular}{l|ll}
    $N_s$& $B^{-1}$& $B^{-1}$ and $Z^{-1}$ \\ 
    \hline
    4& 18& 16 \\ 
    8& 19& 17 \\ 
    16& 19& 17 \\ 
    32& 20& 18 \\ 
    64& 20& 18 \\ 
    128& 20& 19 \\ 
  \end{tabular}\end{center}
\end{table}

\begin{table}
  \caption{For the convection--diffusion problem, convergence factor with $ h = 2^{-7}, H = 2^{-5}, \epsilon=0.01 $, using a nearly exact coarse solver and no restart.}
  \label{convectionrate}
  \begin{center}\begin{tabular}{l|ll}
    $N_s$& $B^{-1}$& $B^{-1}$ and $Z^{-1}$ \\ 
    \hline
    4& 0.46& 0.41 \\ 
    8& 0.46& 0.43 \\ 
    16& 0.47& 0.44 \\ 
    32& 0.49& 0.46 \\ 
    64& 0.49& 0.46 \\ 
    128& 0.49& 0.48 \\ 
  \end{tabular}\end{center}
\end{table}

\section*{Acknowledgments}

The authors thank the referees for helpful comments and suggestions that have led to an improved version of the paper.

 \bibliographystyle{plain}

\end{document}